\title{Schur times Schubert via the Fomin-Kirillov algebra}
\author{Karola M\'esz\'aros, Greta Panova, Alexander Postnikov}
\address{Karola M\'esz\'aros, Department of Mathematics, 
Cornell Univ., Ithaca, NY, 14853}
\email{karola@math.cornell.edu}
\address{Greta Panova, Department of Mathematics, UCLA, Los Angeles, CA, 90095}
\email{panova@math.ucla.edu}
\address{Alexander Postnikov, Department of  Mathematics, 
MIT, Cambridge, MA, 02139}
\email{apost@math.mit.edu}
\date{\today}
\keywords{Schubert polynomials, Schur polynomials, Pieri formula, 
Fomin-Kirillov algebra, 
generalized Littlewood-Richardson coefficients, 
quantum cohomology, Gromov-Witten invariants, Dunkl elements,
nonnegativity conjecture}
\subjclass[2000]{Primary 05E, 14N}
\thanks{M\'esz\'aros is partially supported by an NSF Postdoctoral Research Fellowship DMS 1103933;
Panova is partially supported by a Simons Postdoctoral Fellowship; Postnikov is 
partially supported by NSF grant DMS-6923772}
\newcommand\noshow[1]{}
\newcommand{\old}[1]{}
\newtheorem{theorem}{Theorem}
\newtheorem*{corollary*}{Corollary}
\newtheorem*{theorem*}{Theorem}
\newtheorem{proposition}[theorem]{Proposition}
\newtheorem{lemma}[theorem]{Lemma}
\newtheorem{corollary}[theorem]{Corollary}
\newtheorem{conjecture}[theorem]{Conjecture}
\theoremstyle{remark}
\theoremstyle{definition}
\def\b{{\bar{\lambda}}}
\def\f_H{{\bf w}}
\def\E{\mathcal{E}}
\def\Z{\mathbb{Z}}
\def\r{{\rm row}}
\def\c{{\rm col}}
\def\L{\mathcal{L}}
\def\D{\mathcal{D}}
\def\Dk{\D_{k\times(n-k)} }
\def\l{{\lambda}}
\def\be{\begin{equation}}
\def\ee{\end{equation} }
\def\bl{\begin{lemma}}
\def\el{\end{lemma} }
\def\t{\theta}
\def\la{\lambda}
\def\x{{\bf x}}
\def\C{\mathbb{C}}
\def\H{{\rm H}}
\def\S{\mathfrak{S}}
\def\Z{\mathbb{Z}}
\def\QH{{\rm QH}}
\def\s{\sigma}
\def\Fl{\mathit{Fl}}
\begin{document}

\begin{abstract}
We study multiplication of any Schubert polynomial $\S_w$ by a 
Schur polynomial $s_{\lambda}$ (the Schubert polynomial of a Grassmannian permutation) and the expansion of this product in the ring of Schubert polynomials.  We derive explicit nonnegative combinatorial expressions for the
expansion coefficients  for
certain special partitions $\lambda$, including hooks and the $2\times 2$ box. We also prove combinatorially the existence of such nonnegative expansion when the Young diagram of $\la$ is a hook plus a box at the $(2,2)$ corner.
We achieve this by evaluating Schubert polynomials at the Dunkl elements of the Fomin-Kirillov algebra and proving special cases of the nonnegativity
conjecture of Fomin and Kirillov.

This approach works in the more general setup of the (small) quantum cohomology
ring of the complex flag manifold and the corresponding 
(3-point) Gromov-Witten invariants.
We provide an algebro-combinatorial proof of the nonnegativity of
the Gromov-Witten invariants in these cases, and present combinatorial
expressions for these coefficients.  
\end{abstract}

\maketitle

\section{Brief Introduction}

An outstanding open problem of modern Schubert Calculus is to 
find a combinatorial rule for the expansion coefficients 
$c_{uv}^w$ of the products of
Schubert polynomials (the generalized Littlewood-Richardson coefficients),  
and thus provide an algebro-combinatorial proof of their positivity.
The coefficients $c_{uv}^w$ are the intersection numbers of the Schubert 
varieties in the complex flag manifold $\Fl_n$.
They play a role in algebraic geometry, representation
theory, and other areas.

We establish combinatorial rules for the coefficients $c_{uv}^w$ when $u$ are certain special permutations. This  confirms the insight of  Fomin and Kirillov \cite{FK}, who introduced a certain noncommutative quadratic
algebra $\E_n$  in the  hopes of finding a combinatorial rule for the
generalized Littlewood-Richardson coefficients $c_{uv}^w$.  A combinatorial proof of the  nonnegativity conjecture of Fomin and Kirillov   \cite[Conjecture 8.1]{FK} would directly yield a  combinatorial rule for the $c_{uv}^w$'s. We prove several special cases of this important conjecture, thereby  obtaining the desired rule for a set of the $c_{uv}^w$'s.

One benefit of the approach via the Fomin-Kirillov algebra is that it can be
easily extended and adapted to the (small) quantum cohomology ring of the
flag manifold $\Fl_n$ and the corresponding (3-point) Gromov-Witten invariants.  These
Gromov-Witten invariants extend the generalized Littlewood-Richardson
coefficients.   They count the numbers of rational curves of a given degree that
pass through given Schubert varieties, and play a role in enumerative algebraic geometry.

Some progress on the nonnegativity conjecture  \cite[Conjecture 8.1]{FK}  was made in [P], where the Fomin-Kirillov
algebra was applied for giving a Pieri formula for the quantum cohomology
ring of $\Fl_n$.  However the problem of finding a combinatorial rule
for the generalized Littlewood-Richardson coefficients and the Gromov-Witten
invariants of $\Fl_n$ via the Fomin-Kirillov algebra (or by any other means)
still remains widely open in the general case.

The main result of  this paper is the proof of  several special cases the of nonnegativity conjecture  of Fomin and Kirillov \cite[Conjecture 8.1]{FK}.  It is worth noting that before our present results, the only progress on the nonnegativity conjecture of Fomin and Kirillov were those given in [P], over a decade ago.  Until now, other means for computing these coefficients have lead only to one of our special cases, see \cite{sottile}. Other cases when two of the permutations are restricted have been studied by Kogan in \cite{kog}. Our current paper is a significant generalization of the results given in [P]. While our theorems still only address  special cases of the nonnegativity conjecture, the results we present  are new and are a compelling  step forward.
\medskip

The outline of this paper is as follows. In Section \ref{sec:back} we explain more of the background as well as state the nonnegativity conjecture of Fomin and Kirillov \cite{FK} and a simplified version of our results.  In Section \ref{sec:hooks} we give an expansion of the
product of any  Schubert polynomial with a Schur function
indexed by a hook in terms of Schubert polynomials by proving the corresponding case of the nonnegativity conjecture. In Section
\ref{sec:hom} we explain what the previous implies about the multiplication of
certain Schubert classes in the quantum and $p$--quantum cohomology rings. Finally, Section
\ref{sec:other} is devoted to proving the nonnegativity of the structure
constants for quantum Schubert polynomials in the case of Schur function
$s_\lambda$
indexed by a hook plus a box, that is $\lambda=(b,2,1^{a-1})$,  
and deriving explicit
expansions of $s_{\lambda}(\theta_1,\ldots,\theta_k)$ 
when $\lambda=(2,2), r^k, (n-k)^r$.

\section{Background and definitions}
\label{sec:back}

 We start with a brief discussion of the cohomology ring 
of the flag manifold, 
the Schubert polynomials, the Fomin-Kirillov algebra $\E_n$, and the 
Fomin-Kirillov nonnegativity conjecture in the classical (non-quantum) case;
see \cite{BGG, fulton, mac, manivel, FK} for  more details.
Then we discuss the quantum extension, see \cite{FGP, P} for more details. We also explain how our results fit in this general scheme.

\subsection{The Fomin-Kirillov nonnegativity conjecture}

According to the classical result by Ehresmann~\cite{Ehr},
the cohomology ring $\H^*(\Fl_n) = \H^*(\Fl_n, \C)$ of the flag 
manifold $\Fl_n$ has the linear basis of Schubert classes $\sigma_w$
labeled by permutations $w\in S_n$ of size $n$.
On the other hand, Borel's theorem \cite{borel} says that the cohomology 
ring $\H^*(\Fl_n)$ is isomorphic to the quotient
of the polynomial ring
$$
\H^*(\Fl_n) \simeq \C[x_1,\dots,x_n]/ \left<e_1,\dots,e_n\right>,
$$
where $e_i = e_i(x_1,\dots,x_n)$
are the elementary symmetric polynomials.

Bernstein, Gelfand, and Gelfand~\cite{BGG} and Demazure~\cite{Dem} related
these two descriptions of the cohomology ring of $\Fl_n$.  Lascoux and
Sch\"utzenberger~\cite{ls} then constructed the Schubert polynomials
$\S_w\in\C[x_1,\dots,x_n]$, $w\in S_n$, whose cosets modulo the 
ideal $\left<e_1,\dots,e_n\right>$ correspond to
the Schubert classes $\s_w$ under Borel's isomorphism.

The generalized Littlewood-Richardson coefficients $c_{uv}^w$
are the expansion coefficients of 
products of the Schubert classes in the cohomology ring $\H^*(\Fl_n)$:
$$
\sigma_u \, \sigma_v  = \sum_{w\in S_n} c_{uv}^w \, \sigma_w.
$$
Equivalently, they are the expansion coefficients 
of products of the Schubert polynomials:
$\S_u \, \S_v  = \sum_w c_{uv}^w\, \S_w$.

The {\it Fomin-Kirillov algebra\/} $\E_n$, introduced in \cite{FK},
is the associative algebra over $\C$
generated by $x_{ij}$, $1\leq i<j\leq n$, with 
the following relations:
$$
\begin{array}{l}
\displaystyle
x_{ij}^2 = 0,\\[.05in]
\displaystyle
x_{ij} \, x_{jk} = x_{ik}\,  x_{ij} + x_{jk} \, x_{ik},
\qquad
\displaystyle
x_{jk} \, x_{ij} = x_{ij} \, x_{ik}  + x_{ik} \, x_{jk},\\[.05in]
x_{ij} \, x_{kl} = x_{kl} \, x_{ij}\qquad
\textrm{for distinct } i,j,k,l.
\end{array}
$$
It comes equipped with the {\it Dunkl elements}
$$
\theta_i = - \sum_{j<i} x_{ji}  + \sum_{k>i} x_{ik}.
$$
It is not hard to see from the relations in $\E_n$ that the Dunkl
elements commute pairwise $\theta_i \theta_j = \theta_j \theta_i$ (\cite[Lemma 5.1]{FK}).

The Fomin-Kirillov algebra $\E_n$ acts on the cohomology ring $\H^*(\Fl_n)$
by the following {\it Bruhat operators:}
$$
x_{ij}:\sigma_w\longmapsto \left\{
\begin{array}{cl}
\sigma_{w \, s_{ij}}, &\textrm{if } \ell(w\,s_{ij}) = \ell(w)+1\\
0&\textrm{otherwise,}
\end{array}
\right.
$$
where $s_{ij}\in S_n$ denotes the transposition of $i$ and $j$,
and $\ell(w)$ denotes the length of a permutation $w\in S_n$.

The classical Monk's formula says that the Dunkl elements $\theta_i$ 
act on the cohomology ring $\H^*(\Fl_n)$
as the operators of multiplication by the $x_i$ (under Borel's isomorphism),
$\theta_i:\sigma_w\mapsto x_i \,\sigma_w$.
The commutative subalgebra of $\E_n$ generated by the Dunkl elements 
$\theta_i$ is canonically isomorphic to the cohomology ring $\H^*(\Fl_n)$.

Since the Dunkl elements $\theta_i$ commute pairwise, one can evaluate
a Schubert polynomial (or any other polynomial) at these elements
$\S_w(\theta_1,\dots,\theta_n)\in \E_n$.

It follows immediately from the definitions that these evaluations
act on the cohomology ring of $\Fl_n$ as
$$
\S_u(\theta_1,\dots,\theta_n) 
: \sigma_v \mapsto \sum_{w\in S_n} c_{uv}^w \, \sigma_w.
$$
Indeed, $\S_u(\theta_1,\dots,\theta_n)$ acts on the cohomology 
ring $\H^*(\Fl_n)$ as the operator of multiplication by the Schubert class
$\sigma_u$.

This implies that if there exists an explicit expression of the evaluation
$\S_u(\theta_1,\dots,\theta_n)$ in which every monomial in the generators $x_{ij}$ ($i<j$) has a nonnegative coefficient, such expression immediately gives a combinatorial rule
for the generalized Littlewood-Richardson coefficients $c_{uv}^w$
for all permutations $v$ and $w$.

Let $\E_n^+ \subset \E_n$ be the cone of all nonnegative linear combinations
of monomials in the generators $x_{ij}$, $i<j$,  of $\E_n$.
Fomin and Kirillov formulated the following Nonnegativity Conjecture in light of the search for a combinatorial proof of the positivity of $c^w_{uv}$. 

\begin{conjecture}
\label{c1} 
{\rm \cite[Conjecture~8.1]{FK}} \
For any permutation $u\in S_n$, the evaluation
$\S_u(\theta_1,\dots,\theta_n)$ belongs to the nonnegative cone $\E_n^+$.
\end{conjecture}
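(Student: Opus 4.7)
The plan is induction on the length $\ell(u)$ together with a Monk-type recursion inside $\E_n$. The base cases are clean: for $u=e$ one has $\S_e(\theta)=1\in\E_n^+$, and for $u=s_i$ a direct computation telescopes all internal cancellations and gives
\[
\theta_1+\cdots+\theta_i \;=\; \sum_{1\le j\le i<l\le n} x_{jl} \;\in\; \E_n^+,
\]
since every $x_{jk}$ with $k\le i$ appears with coefficient $+1$ from $\theta_k$ and $-1$ from $\theta_j$, and cancels.

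For the inductive step my first attempt is to exploit Monk's rule $\S_u\cdot\S_{s_i}=\sum_v \S_v$ read backwards in $\E_n$: right-multiply a conjecturally positive expression for $\S_u(\theta_1,\dots,\theta_n)$ by $\sum_{j\le i<l}x_{jl}$, and try to partition the resulting monomials into blocks indexed by the covers $v=u\cdot t_{ab}$ appearing in Monk's formula so that each block equals $\S_v(\theta)$. A variant would invoke the Lascoux--Sch\"utzenberger transition formula to reduce an arbitrary $u$ to longer permutations, combined with the divided-difference recursion $\S_{us_i}=\partial_i\S_u$; but pushing $\partial_i$ through an element of $\E_n^+$ does not obviously preserve positivity, so this route requires care.

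A more ambitious route is to construct, for each $u\in S_n$, a combinatorial model -- ideally a version of pipe dreams or RC-graphs living natively in $\E_n$ -- whose configurations are weighted by positive monomials in the $x_{ij}$ and whose generating function equals $\S_u(\theta_1,\dots,\theta_n)$ modulo the $\E_n$ relations. The hook, $2\times 2$, and hook-plus-box Grassmannian cases treated later in the paper furnish precisely such models for special $u$; the task would be to unify and extend them uniformly to all $u\in S_n$.

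The main obstacle is that the $\E_n$ relations $x_{ij}^2=0$ and the braid-type triangle relations force massive cancellation when one naively substitutes $\theta_i=-\sum_{j<i}x_{ji}+\sum_{k>i}x_{ik}$ into, say, the pipe-dream formula for $\S_u$. Any candidate positive expansion has to survive these cancellations and realize a genuine bijective model, and no uniform such model is currently known for general $u$. Equivalently, one needs either a canonical positive basis of the image of $\C[\theta_1,\dots,\theta_n]$ in $\E_n$, or a sign-reversing involution on the naive signed expansion -- exactly the structural ingredient whose absence keeps the full conjecture open, and the reason why this paper and its predecessor \cite{P} settle only carefully chosen special $u$ rather than the whole statement.
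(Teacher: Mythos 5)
There is a fundamental mismatch here: the statement you are addressing is Conjecture~\ref{c1}, the Fomin--Kirillov nonnegativity conjecture \cite[Conjecture~8.1]{FK}, which the paper does \emph{not} prove. It remains open; the paper's contribution is to establish it only for special permutations $u$ (Grassmannian permutations whose code is a hook, a hook plus a box at $(2,2)$, the $2\times 2$ box, and rectangles -- Theorems~\ref{thm:hooks}, \ref{tim:hook+box}, \ref{thm:twobytwo} and Proposition~\ref{prop:rectangles}), building on the quantum Pieri rule (Theorem~\ref{th:pieri}) and on structural lemmas such as Lemma~\ref{lemma:classes} and Corollary~\ref{kill-induced}. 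So there is no ``paper proof'' to compare yours against, and any purported proof of the full statement would be a major new result requiring far more than is on the page.

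Your proposal is not such a proof, and you essentially concede this yourself. What you verify correctly are only the trivial base cases: $\S_e(\theta)=1$ and $\S_{s_i}(\theta)=\theta_1+\cdots+\theta_i=\sum_{j\le i<l}x_{jl}$, which is the $e_1$ instance of Theorem~\ref{th:pieri} already in \cite{P}. The inductive step is where the genuine gap lies. Right-multiplying a nonnegative expression for $\S_u(\theta)$ by $\sum_{j\le i<l}x_{jl}$ produces an element that acts as multiplication by $\sigma_u\sigma_{s_i}=\sum_v\sigma_v$, summed over \emph{all} Monk covers $v=u\,t_{ab}$ with $a\le i<b$; nonnegativity of this aggregate says nothing about any single summand, and to isolate the one cover $v$ you want you would have to subtract the others, destroying membership in $\E_n^+$. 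You offer no mechanism (a block decomposition, a sign-reversing involution, a canonical positive normal form for the image of $\C[\theta_1,\dots,\theta_n]$ in $\E_n$) that performs this separation, and you note yourself that the divided-difference/transition alternative does not visibly preserve the cone. The ``more ambitious route'' of a pipe-dream-like model native to $\E_n$ is a restatement of the open problem rather than a step toward solving it. In short: only the base cases stand; the argument for general $u$ is absent, which is exactly why the conjecture is still a conjecture and why the paper restricts itself to the special cases listed above.
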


\subsection{New results}
Our main result, in a simplified form, is a proof of some special cases of Conjecture ~\ref{c1} beyond the Pieri rule proven in \cite{P}:

\begin{theorem}  For  a Grassmannian permutation $u\in S_n$, whose code $\lambda(u)$ is a hook shape or a hook shape with a box added in position $(2,2)$,  the evaluation
$\S_u(\theta_1,\dots,\theta_n)$ belongs to the nonnegative cone $\E_n^+$. 
\end{theorem}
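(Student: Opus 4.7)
Since $u$ is a Grassmannian permutation with unique descent at some position $k$, the Schubert polynomial $\S_u$ equals the Schur polynomial $s_{\lambda}(x_1,\ldots,x_k)$ for $\lambda=\lambda(u)$, so the statement amounts to proving $s_\lambda(\theta_1,\ldots,\theta_k)\in\E_n^+$ when $\lambda$ is a hook $(b,1^{a-1})$ or $\lambda=(b,2,1^{a-1})$. The Pieri cases $\lambda=(r)$ and $\lambda=(1^r)$ are already in $\E_n^+$ by [P], and the plan is to bootstrap from these.

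For a hook $\lambda=(b,1^{a-1})$, I would start from the classical tableau branching identity obtained by conditioning on the corner entry $T(1,1)$ of a semistandard hook tableau,
\[
s_{(b,1^{a-1})}(x_1,\ldots,x_k) \;=\; \sum_{i=1}^{k} x_i\cdot h_{b-1}(x_i,\ldots,x_k)\cdot e_{a-1}(x_{i+1},\ldots,x_k),
\]
which specializes to an identity in $\E_n$ under $x_j\mapsto \theta_j$ since the Dunkl elements commute. Each factor $h_{b-1}(\theta_i,\ldots,\theta_k)$ and $e_{a-1}(\theta_{i+1},\ldots,\theta_k)$ admits an explicit nonnegative monomial expansion from the Pieri rule of [P], realized as a sum over certain ``weakly/strictly increasing'' chains of generators $x_{pq}$. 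The prefactor $\theta_i=-\sum_{j<i}x_{ji}+\sum_{\ell>i}x_{i\ell}$ is, however, \emph{signed}: one must show that, after expanding and straightening via the quadratic relations of $\E_n$, the negative terms coming from the left half of $\theta_i$ cancel positive ones. Concretely, I would construct a sign-reversing involution on pairs (prefix letter, chain) pairing each chain starting with $-x_{ji}$ ($j<i$) with the chain obtained by absorbing $x_{ji}$ into it via a single braid/commutation move of $\E_n$.

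For $\lambda=(b,2,1^{a-1})$ I would use the Pieri identity
\[
s_{(b,2,1^{a-1})} \;=\; s_{(b,1^a)}\cdot s_{(1)} \;-\; s_{(b+1,1^a)} \;-\; s_{(b,1^{a+1})}
\]
evaluated at $(\theta_1,\ldots,\theta_k)$. By the hook case, each of the three hooks on the right has an explicit nonnegative expansion, and $s_{(1)}(\theta_1,\ldots,\theta_k)=\sum_{i\le k<j}x_{ij}$ is manifestly in $\E_n^+$. The task is then to exhibit an injection from the combinatorial objects indexing $s_{(b+1,1^a)}(\theta)+s_{(b,1^{a+1})}(\theta)$ into the ones indexing the product $s_{(b,1^a)}(\theta)\cdot s_{(1)}(\theta)$, with the remaining objects producing the desired nonnegative witness. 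The extra factor $s_{(1)}(\theta)$ should play the role of a ``free slot'' into which a single generator $x_{pq}$ can be inserted, converting a hook-chain of one shape into a hook-chain of the enlarged shape.

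The main obstacle in both halves is controlling a normal form on $\E_n$ stable under the involution/injection: the Fomin--Kirillov relations $x_{ij}x_{jk}=x_{ik}x_{ij}+x_{jk}x_{ik}$ are non-homogeneous in the index pattern, so different combinatorial objects can straighten to the same monomial and one must match multiplicities carefully. I expect that a tight enough labelling of the Pieri chains from [P] --- for example by adapted lattice paths or pipe dreams --- will make the cancellations transparent, but designing that labelling is the delicate step.
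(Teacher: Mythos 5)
Your reduction to $s_\lambda(\theta_1,\dots,\theta_k)$ is correct, and your Pieri identity for the hook-plus-box shape is exactly the one the paper uses (up to an index shift), but both halves of the argument have genuine gaps. For the hook case, the corner-conditioning identity is a valid polynomial identity, yet the plan built on it fails at its first step: the Pieri rule of [P] expands $e_m(\theta_I)$ over monomials $x_{a_1b_1}\cdots x_{a_mb_m}$ with $a_j\in I$ and $b_j\in[n]\setminus I$, and this lies in $\E_n^+$ only when $I$ is an initial segment $\{1,\dots,m'\}$; otherwise some $b_j<a_j$ and $x_{a_jb_j}=-x_{b_ja_j}$ enters with a negative sign. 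Your factors $h_{b-1}(\theta_i,\dots,\theta_k)$ and $e_{a-1}(\theta_{i+1},\dots,\theta_k)$ are therefore \emph{not} manifestly nonnegative for $i>1$, so the negative contributions you must cancel come not only from the signed prefactor $\theta_i$ but from every factor, and the sign-reversing involution --- which you assert rather than construct --- is the entire content of the proof. The paper instead inducts on the number of columns of the hook via $e_l(\theta)h_v(\theta)=s_{(v+1,1^{l-1})}(\theta)+s_{(v,1^l)}(\theta)$, where both factors are taken on the full initial segment and hence are manifestly in $\E_n^+$; the real work is then to organize the monomials of the product into forest diagrams and labeling classes (Lemmas \ref{posets:classes} and \ref{lemma:classes}), to compute the explicit coefficients $c_D^\lambda$ of \eqref{cdl}, and to kill the remaining terms using $e_ah_b(\theta)=0$ in induced subalgebras (Corollary \ref{kill-induced}).

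For the hook-plus-box case your strategy coincides with the paper's in outline, but the injection you describe cannot be carried out knowing only that the hooks admit \emph{some} nonnegative expansion. The paper's proof of Theorem \ref{tim:hook+box} is a four-case coefficient comparison that crucially uses the explicit binomial formula for $c_D^\lambda$ from Theorem \ref{thm:hooks}, together with a count of how many variables can be moved to the end of a monomial by commutation, to verify that the coefficient of each commutation class in $s_{(b,1^{a-1})}(\theta)h_1(\theta)$ dominates the sum of the coefficients in the two subtracted hooks. Your hook argument, even if repaired, would not produce those explicit coefficients, and the multiplicity bookkeeping under the non-homogeneous relations --- which you correctly identify as the delicate step --- is precisely what remains to be done. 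As it stands, the proposal is a plausible program, not a proof.
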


Moreover, we give \emph{an explicit combinatorial expansion}  in \textbf{Theorems \ref{thm:hooks}} and \textbf{\ref{thm-p} }when $\la=(s,1^{t-1})$ is a hook, when $\la=(2,2)$ (\textbf{Theorem \ref{thm:twobytwo}})  and $\la=(n-k)^r$ or $\la=t^k$ (\textbf{Proposition \ref{prop:rectangles}}). We also prove the existence of a nonnegative expansion when $\la=(b,2,1^{a-1})$ is a hook plus a box at $(2,2)$ in \textbf{Theorem \ref{tim:hook+box}}. 

{\bf Remark.} These results provide {\it combinatorial\/} proofs of the
nonnegativity of the expansion coefficients $c_{uw}^v$ of the product
$\S_u s_w$ in terms of $\S_v$ and, moreover, explicit combinatorial
rules for the coefficients $c_{uw}^v$ for special permutations $u$ as
above and arbitrary permutations $v, w$.

Our main tools come from the following connection with symmetric functions.

Schubert polynomials for \emph{Grassmannian permutations} are actually Schur functions, see e.g. \cite{manivel} and \cite{mac}. Grassmannian permutations, by definition, are permutations $w$ with a unique descent. There is a straightforward bijection between such permutations and partitions which fit in the $k\times (n-k)$ rectangle, where $k$ is the position of the descent. Given a permutation $w$ with a unique descent at position $k$ we define the corresponding partition $\la(w)$, \emph{the code of $w$}, as follows
$$\lambda(w)_i = w_{k+1-i} - (k+1-i).$$ 
In the other direction, given $k$ and $\la$ of at most $k$ parts with $\la_1 \leq n-k$ we define a permutation $w(\la,k)$ by 
\begin{equation}\label{d:perm_la}
w(\la,k)_i =  \la_{k+1-i}+i \quad \text{for $i=1,\ldots,k$, and }\; w_{k+1}\ldots w_n = [n]\setminus \{w_1,\ldots,w_k\},
\end{equation}
where the last $n-k$ elements of $w(\la,k)$ are arranged in increasing order. Clearly these operations are inverses of each other. 
It is well-known that 
\emph{if $w$ is a Grassmannian permutation with descent at $k$, then
$$\S_w(x_1,\ldots,x_n) = s_{\la(w,k)}(x_1,\ldots,x_k).$$ }



In \cite{P}, the problem of evaluating $\S_u$ at the Dunkl elements was solved in the case when $\S_u$ is the elementary
and the complete homogenous symmetric polynomials $e_i(x_1,\dots,x_k)$ 
and $h_i(x_1,\dots,x_k)$ in $k<n$ variables, i.e. when the Young diagram of $\la$ is a row or column. We cite this below as Theorem \ref{th:pieri}.



\subsection{Quantum cohomology}
The story generalizes to the (small) quantum cohomology ring $\QH^*(\Fl_n)
=\QH^*(\Fl_n,\C)$ of
the flag manifold $\Fl_n$ and the corresponding \emph{3-point Gromov-Witten invariants}.  
As a vector space, the quantum cohomology is isomporphic to 
\[
  \QH^*(\Fl_n)\cong \H^*(\Fl_n)\otimes \C[q_1,\dots,q_{n-1}].
\]
Thus the Schubert classes $\sigma_w$, $w\in S_n$, form a 
linear basis of $\QH^*(\Fl_n)$ over $\C[q_1,\dots,q_{n-1}]$.
However, the multiplicative structure in~$\QH^*(\Fl_n)$ 
is quite different from that of the usual cohomology. 

A quantum analogue of Borel's theorem was suggested by Givental and
Kim~\cite{giv-kim}, and then justified by Kim~\cite{Kim} and
Ciocan-Fontanine~\cite{ciocan}.  
They showed that the quantum cohomology
ring $\QH^*(\Fl_n)$ is canonically isomorphic to the quotient
\begin{equation}
  \label{eq:q-factor}
\QH^*(\Fl_n)\simeq
  \C[x_1,\dots,x_n;q_1,\dots,q_{n-1}]\,/\left<E_1,E_2,\dots,E_n\right>,
\end{equation}
where  the $E_i\in\C[x_1,\dots,x_n;q_1,\dots,q_{n-1}]$ are 
are certain $q$-deformations of the elementary symmetric
polynomials $e_i=e_i(x_1,\dots,x_n)$, and they specialize to the 
$e_i$ when $q_1=\dots=q_{n-1}=0$.


Analogs of the Schubert polynomials for the quantum cohomology, called 
the {\it quantum Schubert polynomials\/} $\S_w^q$,
were constructed in \cite{FGP}.
%
%
According to \cite{FGP}, the cosets of these polynomials 
$\S_w^q$ represent the Schubert classes $\sigma_w$
in $\QH^*(\Fl_n)$  under the isomorphism (\ref{eq:q-factor}).
This provides an extension of results of Bernstein-Gelfand-Gelfand
\cite{BGG} to the quantum cohomology,
and reduces the geometric problem of multiplying the Schubert classes 
in the quantum cohomology and calculating the 3-point Gromov-Witten invariants
to the combinatorial problem of expanding products of the quantum Schubert
polynomials.

A quantum deformation of the algebra $\E_n$, denoted by  $\E_n^q$, was also
constructed in \cite{FK}, as well as the more general $\E_n^p$. 
Briefly, $\E_n^p$ is defined similarly to $\E_n$: it is generated by $x_{ij}$ and $p_{ij}$ with the additional (modified) relations that 
$$x_{ij}^2=p_{ij} \;, \text{ and }\;  [p_{ij},p_{kl}] = [p_{ij},x_{kl}]=0\,,\quad\textrm{for any }
  i,j,k,\textrm{ and }l\,,$$ where [,] is the commutator.
Then  $\E_n$ is the quotient of the algebra $\E_n^p$ 
modulo the ideal generated by the $p_{ij}$.  
Also let $\E_n^q$ be the the quotient of $\E_n^p$ modulo the 
ideal generated by the $p_{ij}$ with
$|i-j|\geq 2$.  The image of $p_{i\,i+1}$ in $\E_n^q$ is denoted~$q_i$. 

  These algebras also come with pairwise commuting
Dunkl elements $\theta_i$ (defined as in $\E_n$).  The generators of the algebra $\E_n^q$ act on the 
quantum cohomology ring $\QH^*(\Fl_n)$ by simple and explicit 
quantum Bruhat operators.
It was shown in \cite{P} that the commutative subalgebra of $\E_n^q$
generated by the Dunkl elements $\theta_i$ is canonically isomorphic to the
quantum cohomology ring of $\Fl_n$.
Similar to the above discussion for the classical case, 
a way to express the evaluation
of a quantum Schubert polynomial $\S_u^q(\theta_1,\dots,\theta_n)
\in \E_n^q$ as a nonnegative expression in the generatiors of $\E_n^q$
immediately implies a \emph{combinatorial rule for the 3-point Gromov-Witten invariants};
see \cite{P} for more details.

The \emph{$p$--quantum elementary symmetric polynomials} $E_k(x_{i_1}, ...,
x_{i_m}; p)$ are defined in \cite{P}.  (Here $\{i_1,\dots,i_m\}$ is a
subset of $[n]$.)   These polynomials specialize to the usual
elementary symmetric polynomials $e_k(x_{i_1},..., x_{i_m})$ when all
$p_{ij}=0$.

The following Pieri rule will be instrumental for the proofs in the current paper.
\begin{theorem} \cite[Theorem 3.1]{P} {\rm (Quantum Pieri's formula)} \
\label{th:pieri}
Let $I$ be a subset in $\{1,2,\dots,n\}$, and
let $J=\{1,2,\dots,n\}\setminus I$.  Then, for $k\geq 1$, 
the evaluation $E_k(\theta_I;p)\in \E_n^p$ of the $p$-quantum elementary 
symmetric polynomial at the Dunkl elements $\theta_i$ is given by 
\begin{equation}
  \label{eq:pieri}
  E_k(\theta_I;p) = \sum x_{a_1\,b_1}x_{a_2\,b_2}\cdots x_{a_k b_k},
\end{equation}
where the sum is over all sequences of integers $a_1,\dots,a_k,b_1,\dots,b_k$ 
such that {\rm ({i})} $a_j\in I,$ $b_j\in J$, for $j=1,\dots,k$; 
{\rm ({i}{i})} the $a_1,\dots,a_k$ are distinct; 
{\rm ({i}{i}{i})}  $b_1\leq \cdots\leq b_k$.
\end{theorem}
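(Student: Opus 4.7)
The plan is to proceed by induction on $k$, nested inside an induction on $|I|$, paralleling the classical derivation of $e_k$ from Monk's formula. For the base case $k=1$, one computes directly: $E_1(\theta_I;p) = \sum_{i\in I}\theta_i$, and substituting $\theta_i = -\sum_{j<i}x_{ji} + \sum_{k>i}x_{ik}$, the contributions indexed by pairs with both indices in $I$ cancel in pairs (under the antisymmetric sign convention built into the Dunkl elements), leaving exactly $\sum_{a\in I,\,b\in J} x_{ab}$, which is the RHS at $k=1$.

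For the inductive step, pick $m=\max(I)$ and set $I' = I\setminus\{m\}$, $J' = J\cup\{m\}$. Since the $\theta_i$ commute pairwise, the classical recursion for the elementary symmetric polynomial (extended to its $p$-quantum deformation) evaluates to
\[
E_k(\theta_I;p) = E_k(\theta_{I'};p) + \theta_m\cdot E_{k-1}(\theta_{I'};p),
\]
plus possible $p$-terms coming from the deformation. By the inductive hypothesis, $E_k(\theta_{I'};p)$ already accounts for the sequences in the claimed RHS in which no $a_j$ equals $m$. It remains to identify $\theta_m\cdot E_{k-1}(\theta_{I'};p)$ with the contribution from sequences where some $a_j = m$, and to absorb the $p$-deformation into the $x_{ij}^2 = p_{ij}$ relations that arise naturally during the computation.

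The core computation is to multiply $\theta_m = \sum_{c\neq m} \pm x_{mc}$ into the expanded product $x_{a_1 b_1}\cdots x_{a_{k-1}b_{k-1}}$ and show that the result is a sum over sequences satisfying $b_1\leq\cdots\leq b_k$ (with one of the $a_j$ equal to $m$). The Fomin-Kirillov relations
\[
x_{ij}x_{jk} = x_{ik}x_{ij} + x_{jk}x_{ik}, \qquad x_{jk}x_{ij} = x_{ij}x_{ik} + x_{ik}x_{jk}, \qquad x_{ij}x_{kl} = x_{kl}x_{ij}
\]
drive this rearrangement: the commuting relation lets $x_{mc}$ slide past disjoint factors, while the braid-type relations repeatedly replace $x_{mc}x_{c d}$ by $x_{md}x_{mc} + x_{cd}x_{md}$ (or its sibling), each step either producing the desired sorted word or feeding the outcome back into the recursion for the next step.

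The main obstacle will be the bookkeeping: one has to verify that after this cascade, every admissible sorted sequence $(a_1,\dots,a_k;b_1,\dots,b_k)$ with some $a_j = m$ appears exactly once with coefficient $1$, and that every occurrence of $x_{ij}^2 = p_{ij}$ reassembles into precisely the $p$-correction in the definition of $E_k(\cdot;p)$. A cleaner route that I would attempt first is to flip the direction of the induction: start from the claimed RHS, split by whether $m\in\{a_1,\dots,a_k\}$, and use the FK relations in reverse to extract a leftmost factor of $\theta_m$ from each sorted word in which $m$ appears; this reduces the identity to a manipulation of indexed sums rather than a chain of noncommutative rewriting identities, and it isolates the $p$-terms into a single pass of $x_{ij}^2$ reductions.
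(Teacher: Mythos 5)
The paper does not actually prove this statement: Theorem~\ref{th:pieri} is imported verbatim from \cite[Theorem 3.1]{P} and used as a black box throughout, so there is no internal proof to compare yours against. Judged on its own terms, your proposal is a reasonable strategy outline (and close in spirit to the inductive argument in \cite{P}), but it is not yet a proof, for two concrete reasons. First, you never pin down the definition of the $p$-quantum elementary symmetric polynomials, so the recursion
\[
E_k(\theta_I;p) \;=\; E_k(\theta_{I'};p) \;+\; \theta_m\,E_{k-1}(\theta_{I'};p) \;+\; (\text{$p$-terms})
\]
is not usable as stated: the ``possible $p$-terms'' are not an afterthought to be ``absorbed'' later but are part of the definition of the deformation (in \cite{P} they take the form $\sum_{i\in I'} p_{im}\,E_{k-2}(\theta_{I'\setminus\{i\}};p)$), and the induction cannot close without matching them exactly against the $x_{ij}^2=p_{ij}$ contributions. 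Relatedly, the base case only reads correctly once you fix the convention $x_{ab}=-x_{ba}$ for $a>b$, since the generators of $\E_n^p$ are only the $x_{ij}$ with $i<j$.

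Second, and more seriously, the step you defer as ``bookkeeping'' is the entire content of the theorem. Multiplying $\theta_m=\sum_{c\neq m}\pm x_{mc}$ into a sorted word $x_{a_1b_1}\cdots x_{a_{k-1}b_{k-1}}$ produces, besides the terms you want, (a) terms $x_{mc}$ with $c\in I'$, which correspond to no admissible sequence and must cancel against the correction terms generated by the braid relations $x_{ij}x_{jk}=x_{ik}x_{ij}+x_{jk}x_{ik}$, and (b) multiplicity issues, since the factor with $a_j=m$ must land in the position dictated by the sorted $b$'s, and each rewriting step doubles the number of terms. Your proposal asserts that ``every admissible sorted sequence appears exactly once with coefficient $1$'' but gives no mechanism for verifying the cancellation in (a) or the multiplicity-one claim in (b); ``it remains to identify\ldots'' is a restatement of the theorem, not an argument. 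Your alternative suggestion (running the rewriting backwards to extract a leftmost $\theta_m$ from the sorted sum) is a sensible way to organize the computation, but it faces the same cancellation and multiplicity obligations and is likewise left unexecuted.
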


Specializing $p_{ij}=0$, one obtains $E_k(x_I;0)=e_k(x_I)$, the 
usual elementary symmetric polynomial.

A completely analogous statement holds for the homogeneous symmetric functions $h_k$, whose $p-$quantum definition is as the corresponding $p-$quantum Schubert polynomial. The expansion of ($p-$quantum) $h_k(\t_I)$ is obtained by interchanging the roles of the first and second indices in the variables $x_{ij}$ in \eqref{eq:pieri}, i.e. 
\begin{equation}
 h_k(\theta_I) = \sum x_{a_1\,b_1}x_{a_2\,b_2}\cdots x_{a_k b_k},
\end{equation}
where the sum is over all sequences of integers $a_1,\dots,a_k,b_1,\dots,b_k$ 
such that {\rm ({i})} $a_j\in I,$ $b_j\in J$, for $j=1,\dots,k$; 
{\rm ({i}{i})} the $b_1,\dots,b_k$ are distinct; 
{\rm ({i}{i}{i})}  $a_1\leq \cdots\leq a_k$.

\medskip

Following the definition of quantum Schubert
polynomials  $\frak{S}_w^q$ in \cite{FGP}, we define the more 
general \emph{$p$-quantum Schubert  polynomials  $\frak{S}_w^p$ }, as follows. 
Let 
$$
e_{i_1, \ldots, i_{n-1}}=
e_{i_1}(x_1)e_{i_2}(x_1, x_2)\cdots e_{i_{n-1}}(x_1, \ldots,
x_{n-1}),
$$
where $i_j \in \{0, 1, 2, \ldots, j\}$, for $j \in [n-1]$, and
$e_0^k=1$. Similarly, let 
$$
E^p_{i_1, \ldots, i_{n-1}}=E_{i_1}^1E_{i_2}^2\cdots
E_{i_{n-1}}^{n-1}=E_{i_1}(x_1; p)E_{i_2}(x_1, x_2; p)\cdots E_{i_{n-1}}(x_1,
\ldots, x_{n-1}; p). 
$$

One can uniquely write a Schubert polynomial $\S_w$ as a linear 
combination of the $e_{i_1,\dots,i_{n-1}}$:
\be \label{schub_e}
\S_w=\sum \alpha_{i_1, \ldots, i_{n-1}}\, e_{i_1, \ldots, i_{n-1}}.
\ee
The {\it $p$-quantum Schubert polynomial} $\S_w^p$ is then defined as  
\be \label{schub_e_q}
\S_w^p=\sum \alpha_{i_1, \ldots, i_{n-1}}\, E^p_{i_1, \ldots, i_{n-1}}.
\ee

For any $\la$ we define the {\it $p$-quantum Schur polynomial\/} as
$$
s_\lambda^p(x_1,\dots,x_k) = \S_{w(\la,k)}^p.
$$

Note that the $p$-quantum Schubert polynomial $\S_w^p$ 
specializes to the quantum Schubert polynomial $\S_w^q$ from \cite{FGP} 
if we set 
$p_{i\,i+1}=q_i$, $i=1,2,\dots,n-1$, and $p_{ij}=0$, for $|i-j|\geq 2$.

We can now give the quantum Nonnegativity Conjecture of Fomin and Kirillov.

\begin{conjecture} 
\cite[Conjecture 14.1]{FK} 
\label{c2} For any  $w \in {S}_n$, the evaluation
of the quantum Schubert polynomial 
$\S_w^q(x_1,\dots,x_n;q_1,\dots,q_{n-1})$ at the Dunkl elements $\theta_i$
$$
\frak{S}_w^q(\t)=\frak{S}^q_w(\t_1, \ldots, \t_{n}; q_1,\dots,q_{n-1}) 
\in \E_n^q
$$
can be written as a nonnegative linear combination of monomials 
in the generators $x_{ij}$, for $i<j$, of the Fomin-Kirillov
algebra $\E_n^q$.
\end{conjecture}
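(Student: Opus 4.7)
The plan is to reduce Conjecture \ref{c2} to a sequence of applications of Theorem \ref{th:pieri}, working at the stronger level of the $p$-quantum algebra $\E_n^p$. First I would establish the analogous statement that $\S_w^p(\theta_1,\ldots,\theta_n)$ lies in the nonnegative cone of $\E_n^p$ for every $w\in S_n$; specializing $p_{i\,i+1}=q_i$ and $p_{ij}=0$ for $|i-j|\ge 2$ then recovers Conjecture \ref{c2} in $\E_n^q$, while setting all $p_{ij}=0$ recovers the classical Conjecture \ref{c1}.

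The key computational step is to exploit the defining expansion \eqref{schub_e_q}, namely
$\S_w^p = \sum \alpha_{i_1,\ldots,i_{n-1}}\,E^p_{i_1,\ldots,i_{n-1}}$, and expand each factor $E_{i_j}(x_1,\ldots,x_j;p)$ at the Dunkl elements via the quantum Pieri formula with $I=\{1,\ldots,j\}$. Each such evaluation $E_{i_j}(\theta_{[j]};p)$ is, by Theorem \ref{th:pieri}, a \emph{manifestly positive} sum of monomials $x_{a_1 b_1}\cdots x_{a_{i_j} b_{i_j}}$ in the generators, and the product of several such positive sums (multiplied in the prescribed order) remains a nonnegative element of $\E_n^p$. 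The task is therefore to combine these pieces across the outer sum so that, after using the quadratic relations of $\E_n^p$, the mixed signs of the coefficients $\alpha_{i_1,\ldots,i_{n-1}}$ cancel against each other and leave a nonnegative remainder. Ideally one would exhibit, for each $w$, a combinatorial model (a set of monomials with positive multiplicities, parameterized perhaps by pipe dreams or RC-graphs for $w$) and a sign-reversing involution on the signed monomial expansion whose fixed points are exactly that model.

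The hard part is twofold. First, the coefficients $\alpha_{i_1,\ldots,i_{n-1}}$ in \eqref{schub_e} have genuinely mixed signs for general $w$ and no uniformly positive formula is known, so the signed sum we must cancel has no obvious structure. Second, cancellation in $\E_n^p$ has to be carried out using the nontrivial quadratic relations $x_{ij}x_{jk}=x_{ik}x_{ij}+x_{jk}x_{ik}$ and $x_{ij}^2=p_{ij}$, which do not cancel monomials outright but replace them by new ones, making the bookkeeping delicate. It is this obstruction that has kept the conjecture open for over a decade, and it is the reason the present paper proceeds shape-by-shape: for Grassmannian $w$ indexed by a hook, $(2,2)$, a rectangle, or a hook plus a box, the Jacobi-Trudi determinant for $s_\lambda^p$ in the $h_k$ (or $e_k$) is short enough that the cancellations can be matched explicitly, using Theorem \ref{th:pieri} for the positive terms and direct manipulation of the $\E_n^p$-relations for the negative ones. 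A general attack would presumably have to find a uniform such involution, and I would only attempt it after extracting from the hook and hook-plus-box cases a pattern — likely a labeling of monomial contributions by decorated wiring diagrams — that can be pushed to arbitrary $w$.
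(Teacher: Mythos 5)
This statement is an open conjecture (Fomin--Kirillov, Conjecture 14.1), and the paper does not prove it; it only establishes special cases (hooks, hook plus a box, $(2,2)$, rectangles). Your proposal likewise does not prove it: the decisive step --- exhibiting, for arbitrary $w$, a combinatorial model and a cancellation mechanism that absorbs the mixed signs of the coefficients $\alpha_{i_1,\ldots,i_{n-1}}$ in \eqref{schub_e_q} --- is exactly the content of the conjecture, and your text only says that ``ideally one would exhibit'' such an involution. So there is a genuine gap, and it is the whole theorem: everything before that point (reduce to $\E_n^p$, expand each $E_{i_j}(\theta_{[j]};p)$ positively via Theorem \ref{th:pieri}, specialize $p_{i\,i+1}=q_i$) produces a \emph{signed} expression, and no argument is supplied that the negative contributions cancel.

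That said, your framing of the reduction is consistent with how the paper attacks its special cases: it works in $\E_n^p$, uses Theorem \ref{th:pieri} as the positive building block, and transfers classical expansions to the ($p$-)quantum setting via Lemma \ref{obv}. One difference in mechanism worth noting: for the shapes it can handle, the paper does not run an involution on the full signed expansion coming from \eqref{schub_e}. Instead it uses short Pieri/Jacobi--Trudi identities (e.g.\ $e_l h_v = s_{(v+1,1^{l-1})} + s_{(v,1^l)}$, or $s_{(b,1^{a-1})}h_1 - s_{(b,1^a)} - s_{(b+1,1^{a-1})}$) together with Lemma \ref{lemma:classes}, which matches classes of labelings of forest diagrams between the two hooks, plus the vanishing statement of Corollary \ref{kill-induced} to dispose of the leftover terms; the coefficients are then controlled by explicit binomial identities. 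Your diagnosis of why the general case is hard --- mixed signs with no positive formula, and relations that replace monomials rather than cancel them --- is accurate, but diagnosing the obstruction is not the same as overcoming it. As a proof of Conjecture \ref{c2} the proposal cannot be accepted.
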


In this paper we prove the quantum and $p$--quantum analogues of all our results and show that the expansions in $\E^p_n$ and $\E_n$ coincide.

\begin{theorem}  For  $w\in S_n$, for which $\lambda(w)$, the code of $w$, is a hook shape or a hook shape with a box added in position $(2,2)$,   the evaluation
of the quantum Schubert polynomial 
$\S_w^q(x_1,\dots,x_n;q_1,\dots,q_{n-1})$ at the Dunkl elements $\theta_i$
$$
\frak{S}_w^q(\t)=\frak{S}^q_w(\t_1, \ldots, \t_{n}; q_1,\dots,q_{n-1}) 
\in \E_n^q$$
can be written as a nonnegative linear combination of monomials 
in the generators $x_{ij}$, for $i<j$, of the Fomin-Kirillov
algebra $\E_n^q$.
\end{theorem}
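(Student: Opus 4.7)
The plan is to deduce the quantum result by lifting the classical proof from $\mathcal{E}_n$ to $\mathcal{E}_n^p$, and then trivially specializing to $\mathcal{E}_n^q$ via the projection $p_{i,i+1}\mapsto q_i$, $p_{ij}\mapsto 0$ for $|i-j|\ge 2$. The key technical ingredient is already available in $\mathcal{E}_n^p$: the quantum Pieri formula (Theorem \ref{th:pieri}) expresses $E_k(\theta_I;p)$, and by the companion statement $H_k(\theta_I;p)$, as a nonnegative sum of monomials in the $x_{ij}$'s with \emph{no} $p_{ij}$ factors at all. Hence every elementary/homogeneous Dunkl evaluation we will meet already sits in the nonnegative cone of $\mathcal{E}_n^p$.

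First I would unpack the definition. Since $w=w(\lambda,k)$ is Grassmannian, $\mathfrak{S}_w=s_\lambda(x_1,\dots,x_k)$, and by construction $\mathfrak{S}^p_w$ is obtained by writing $\mathfrak{S}_w=\sum \alpha_{i_1,\dots,i_{n-1}}\,e_{i_1,\dots,i_{n-1}}$ and replacing each $e_{i_j}(x_1,\dots,x_j)$ with $E_{i_j}(x_1,\dots,x_j;p)$. Evaluation at Dunkl elements therefore gives
\[
\mathfrak{S}^p_w(\theta)=\sum_{i_1,\dots,i_{n-1}} \alpha_{i_1,\dots,i_{n-1}}\,E_{i_1}(\theta_1;p)\,E_{i_2}(\theta_1,\theta_2;p)\cdots E_{i_{n-1}}(\theta_1,\dots,\theta_{n-1};p).
\]
The classical proofs of Theorems \ref{thm:hooks} and \ref{tim:hook+box} provide a specific nonnegative reorganization of the analogous sum with $e$ in place of $E^p$. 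The strategy is to rerun those manipulations verbatim in $\mathcal{E}_n^p$, with the full Theorem \ref{th:pieri} replacing its $p=0$ specialization. Because Theorem \ref{th:pieri} produces precisely the same $x$-monomial sum regardless of whether we work in $\mathcal{E}_n$ or $\mathcal{E}_n^p$, every intermediate expression matches the classical one term-by-term, and no $p_{ij}$'s ever appear; this is consistent with the paper's announced claim that \emph{the expansions in $\mathcal{E}_n^p$ and $\mathcal{E}_n$ coincide}.

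The one thing that must be verified, and which I expect to be the main obstacle, is that the classical proofs of Theorems \ref{thm:hooks} and \ref{tim:hook+box} never invoke the relation $x_{ij}^2=0$. All rearrangements should use only the braid-type relations $x_{ij}x_{jk}=x_{ik}x_{ij}+x_{jk}x_{ik}$ and $x_{jk}x_{ij}=x_{ij}x_{ik}+x_{ik}x_{jk}$, together with commutation of distinct-index generators; these relations hold identically in $\mathcal{E}_n^p$. Concretely, one needs to check that when products of Pieri sums from Theorem \ref{th:pieri} are expanded and collected to match the combinatorial objects parametrizing the nonnegative expression in Theorems \ref{thm:hooks} and \ref{tim:hook+box}, the collection step never creates or cancels a factor of $x_{ij}^2$. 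Any use of $x_{ij}^2=0$ would, in $\mathcal{E}_n^p$, introduce stray $p_{ij}$ contributions that would have to be shown to cancel separately.

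Once this verification is in hand, the argument concludes by reading the lifted identity both ways: its left side equals $\mathfrak{S}^p_w(\theta)$ by the definition of $\mathfrak{S}^p_w$ together with Theorem \ref{th:pieri}; its right side is the same nonnegative monomial combination of $x_{ij}$'s as in the classical case. Specializing $p_{i,i+1}\mapsto q_i$ and killing the remaining $p_{ij}$'s projects this identity into $\mathcal{E}_n^q$, yielding $\mathfrak{S}^q_w(\theta)\in(\mathcal{E}_n^q)^+$. Since the Dunkl subalgebra of $\mathcal{E}_n^q$ is isomorphic to $\mathrm{QH}^*(\Fl_n)$, this simultaneously produces combinatorial formulas for the corresponding $3$-point Gromov-Witten invariants.
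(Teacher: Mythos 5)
Your proposal is correct and follows essentially the same route as the paper: the paper's Lemma \ref{obv} encapsulates exactly your lifting argument (rerun the classical manipulations in $\E_n^p$, observing that the Pieri expansions of Theorem \ref{th:pieri} contain no $p_{ij}$'s and that the relation $x_{ij}^2=0$ is never invoked), and the paper then specializes to $\E_n^q$ just as you do. You correctly identify the one hypothesis that must be checked — that no step of the proofs of Theorems \ref{thm:hooks} and \ref{tim:hook+box} uses $x_{ij}^2=0$ — which is precisely the condition in Lemma \ref{obv}.
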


\medskip



\section{The nonnegativity conjecture for $s_{\l}$ where $\l$ is a hook}
\label{sec:hooks}

This section concerns the Nonnegativity Conjecture for
$\S_w = s_{\lambda}(x_1,\dots,x_k)$, where $\lambda$ is a hook shape.
Note that an extension of Pieri's formula to hook shapes was given 
by Sottile \cite[Theorem 8, Corollary 9]{sottile} using a different approach.

We prove Conjectures~\ref{c1} and~\ref{c2} 
for Grassmannian permutations $w(\la,k)$ (see \eqref{d:perm_la}), where $\la =(s,1^{t-1})$ is a hook,  
 by giving an explicit expansion for $\frak{S}_w(\t)$ which is in $\E_n^+$ and then using Lemma \ref{obv} to show that this same expansion also equals $\frak{S}^p_w(\t)$.

Consider a rectangle $R_{k\times(n-k)}$ whose rows are indexed by $\{1, \ldots, k\}$ and whose columns are indexed by $\{k+1, \ldots, n\}$. A \textit{box} of this rectangle is specified by its row and column index. A \textit{diagram} $D$ in this rectangle is a collection of boxes. Denote by  $\r(D)$ and $\c(D)$ the number of rows and number of columns which contain a box of $D$, respectively.
We say that a diagram $D$ is a \textit{forest}, if the graph, which we obtain by considering $D$'s boxes as the vertices and connecting two vertices if the corresponding boxes are in the same row or same column and there is no box directly between them, is a forest. See Figure \ref{forest} for an example.

\begin{figure}
\begin{center}
\includegraphics[scale=1.2]{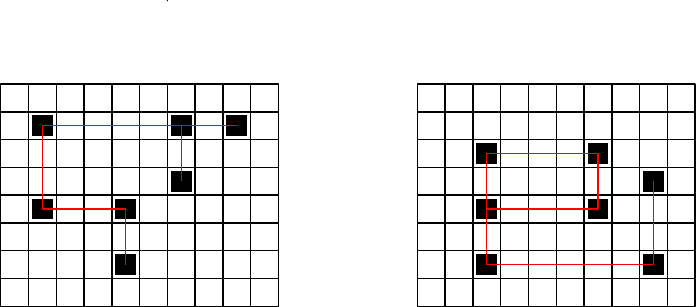}
 \caption{Examples of diagrams. The black boxes indicate the diagrams in the two $8\times 10$ rectangles.  The red edges are the edges of the graph whose vertices are the black boxes and where boxes are connected by an edge if they are in the same row or same column and there is no box directly between them. Thus, the left hand side diagram is a forest, whereas the right hand side diagram is not.}
 \label{forest}
 \end{center}
\end{figure}

Denote by  $ \D_{k\times(n-k)}$ the set of forests which fit into $R_{k\times(n-k)}$. A labeling of a diagram $D \in \Dk$ is an assignment of the numbers $1, 2, \ldots, |D|$ to its boxes (one number to each box). Obviously, there  are $|D|!$ distinct labelings of $D$. Let $D_L$ denote a labeling of $D$. Define  the monomial $x^{D_L}$ in the natural way: if the number $k$ is assigned to the box in row $i_k$ and column $j_k$ in the labeling $D_L$, then $x^{D_L}:=x_{i_1j_1}\cdots x_{i_{|D|}j_{|D|}}$.
 If  for two labelings $D_L\neq D_{L'}$ of $D$ we have that  $x^{D_L}=x^{D_{L'}}$ in $\E_n$,  and in order to get the equality $x^{D_L}=x^{D_{L'}}$ only commutation relations (5) were used, we consider the labelings $D_{L}$ and  $D_{L'}$ equivalent and write $D_L \sim_D D_{L'}$.  The relation $\sim_D$ partitions the set of labelings of $D$. We call the sets under this partition the \textit{classes of labelings}.

 Given a labeling $D_L$ of a diagram $D$, associate to it a poset $P_L^D$ on the boxes of the diagram, which restricts to a total order of the boxes of $D$ in the same column or same row, as prescribed by the labeling $D_L$, and in which these are all of the relations. The following lemma is a direct consequence of the definitions.
 
 \bl \label{posets:classes} Given a diagram $D$ and two labelings $D_L$ and $D_{L'}$ of it,  $D_L \sim_D D_{L'}$ if and only if the posets $P_L^D$ and $P_{L'}^D$ are equal.
 \el

While the next Lemma is also relatively straightforward, the idea of its proof is repeatedly used in this paper. 

\bl \label{lemma:classes} Let $\l=(v+1, 1^{l-1}) \in \Dk$ and $D \in \Dk$ be a forest with $l+v$ boxes and at least $l$ rows and $v+1$ columns.  Then the following two sets are equal:

1.  the classes of labelings of $D$ such that the class contains a labeling with:

  $i_1, \ldots, i_l$ are distinct, $j_1\leq \cdots\leq j_l$, $j_{l+1}, \ldots, j_{l+v}$ are distinct, $i_{l+1} \leq \cdots\leq i_{l+v}$ 

2.  the classes of labelings of $D$ such that  the class contains a labeling with:

$i_1, \ldots, i_{l-1}$ are distinct, $j_1\leq \cdots\leq j_{l-1}$, $j_{l}, \ldots, j_{l+v}$ are distinct, $i_{l} \leq \cdots\leq i_{l+v}$ 

\el

Note that the condition that $\l=(v+1, 1^{l-1}) \in \Dk$ signifies that $k\geq l$ and $n-k \geq v+1$. Also, as seen from the requirement on the forests $D$ we consider, the number of boxes in $D$ is the same as the number of boxes in $\lambda$.  We say that a forest $D$ can be {\it labeled with respect to $\lambda$}, or that a labeling of a forest $D$ is with respect to $\lambda$, if the number of boxes of $\lambda$ and $D$ are the same, the number of rows and columns of $D$ are at least as many as those of $\lambda$ and if there is a labeling of $D$ as prescribed by condition 1 (or 2) in Lemma  \ref{lemma:classes}. Moreover, a {\it class of labelings with respect to $\lambda$} is a class of labelings which contains a labeling with respect to $\lambda$. 
We refer to a labeling that satisfies condition 1 in Lemma \ref{lemma:classes} as a labeling from {\it class 1}, and a labeling that satisfies condition 2 in Lemma \ref{lemma:classes} as a labeling from {\it class 2}.

\medskip

\begin{figure}
\begin{center}
\includegraphics[scale=1.2]{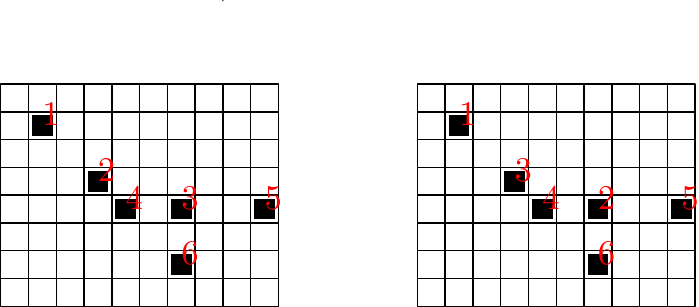}
 \caption{The black boxes indicate the forests in the two $8\times 10$ rectangles.  The red numbers signify the labelings of the forests. Let $\lambda=(4, 1^2)$. The left hand side labeling $L$ is a labeling from class 1 and is equivalent to the right hand side labeling $L'$, which is from class 2. $L'$ is constructed from $L$ as described in the proof of Lemma \ref{lemma:classes}.}
 \label{lemma}
 \end{center}
\end{figure}

\noindent{\it Proof of  Lemma \ref{lemma:classes}.}
We need to show that for every monomial $x^{D_L}$, where $L$ is a labeling in one of the classes, there is a monomial $x^{D_{L'}}$, such that $L'$ is a labeling from the other class and $x^{D_L}=x^{D_{L'}}$. 

Let $L$ be a labeling from class 1, i.e. $x^{D_L} = x_{i_1 j_1}\ldots x_{i_{l+v}j_{l+v}}$ with $j_1\leq \cdots \leq j_l$ and $i_1,\ldots,i_l$ distinct and $i_{l+1}\leq \cdots \leq i_{l+v}$ and $j_{l+1},\ldots,j_{l+v}$ distinct. 
Since $D$ has at least $v+1$ columns, there is an index $r \leq l$, such that $j_r \not \in \{j_{l+1},\ldots,j_{l+v}\}$. Let $r\leq l$ be the largest such index. Then $i_r \neq i_{r+1},\ldots,i_l$ and $j_r \neq j_{r+1},\ldots,j_l$, so $x_{i_rj_r}$ commutes with the variables at positions $r+1,\ldots ,l$ and can be moved to a position $r'-1 \geq l$, such that $r'$ is the smallest index greater than $l$ for which $i_r \leq i_{r'}$. Then 
$$x^{D_L} = x_{i_1j_1}\ldots x_{i_{r-1}j_{r-1}}x_{i_{r+1}j_{r+1}}\ldots x_{i_lj_l} \ldots x_{i_rj_r}x_{i_{r'}j_{r'}}\ldots$$
and since $j_r$ is different from any of $j_{l+1},\ldots,j_{l+v}$ the last monomial is a labeling in  class 2. For an example see Figure \ref{lemma}.

The case when $L$ is a labeling from class 2 follows the same reasoning by exchanging the roles of $i$ and $j$.
\qed

Let  $\L_1^{D, \l}, \ldots, \L_m^{D, \l}$ be all the classes of labelings of a forest $D$ with respect to $\l$ (see definition after Lemma  \ref{lemma:classes}). Let $D_{L_i} \in \L_i^{D, \l}$, $i \in [m]$, be  (arbitrary)  representative labelings from those classes. Denote by $\L(D, \l)=\{D_{L_1}, \ldots, D_{L_m}\}$ these representative labelings. 

\begin{theorem} \label{thm:hooks} Let $\l=(s, 1^{t-1})$ be a hook that fits in a $k\times (n-k)$ rectangle. Then, 
\begin{equation} \label{exp} \S_{w(\l,k)}(\t_1,\ldots,\t_n)=s_{\l}(\t_1, \ldots, \t_k)=\sum_{D \in \D_{k\times(n-k)}} c_D^\l {\sum_{D_L \in \L(D, \l)}x^{D_L}}, \end{equation}
where   
\begin{equation} 
\label{cdl} c_D^\l={{\r(D)-t+\c(D)-s}\choose{\c(D)-s}},\end{equation} 
if  for the forest $D$ we have $\r(D)\geq t, \c(D)\geq s$, and otherwise $c_D^\l=0$. 


\end{theorem}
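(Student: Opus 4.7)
The plan is to reduce the problem to the Pieri rule (Theorem~\ref{th:pieri}) via the hook Jacobi--Trudi identity
$$s_{(s,1^{t-1})}=\sum_{i=0}^{t-1}(-1)^{i}\,e_{t-1-i}(x_{1},\dots,x_{k})\,h_{s+i}(x_{1},\dots,x_{k}).$$
Evaluating at the Dunkl elements $\theta_{1},\dots,\theta_{k}$, each factor $e_{t-1-i}(\theta_{[k]})$ and $h_{s+i}(\theta_{[k]})$ admits an explicit expansion by Theorem~\ref{th:pieri} and its $h$-analogue stated immediately after it. Multiplying these out produces a signed sum of monomials $x^{D_{L}}$, each indexed by a labeling $L$ of a diagram $D\subseteq R_{k\times(n-k)}$ with $|D|=s+t-1$. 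I would then collect these monomials by their underlying diagram $D$ and, within each $D$, by the $\sim_{D}$-equivalence classes from Lemma~\ref{posets:classes}.

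The core combinatorial step is to evaluate, for each forest $D$ and each class in $\L(D,\l)$, the alternating sum $\sum_{i=0}^{t-1}(-1)^{i}\,m_{i}(D,L)$, where $m_{i}(D,L)$ counts the Pieri labelings from $e_{t-1-i}h_{s+i}$ that lie in the class of $L$. Lemma~\ref{lemma:classes} already tells us that the support of this sum is, for every admissible $i$, exactly $\L(D,\l)$ as soon as $\r(D)\ge t$ and $\c(D)\ge s$. What remains is the quantitative count: a refinement of the exchange argument in the proof of Lemma~\ref{lemma:classes}, tracking which ``boundary'' boxes of the forest may legitimately migrate between the $e$-part and the $h$-part of the Pieri sequence, should express $m_{i}(D,L)$ as a product of binomial coefficients in $\r(D)-t$, $\c(D)-s$, and $i$. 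The alternating sum then collapses, via a Vandermonde-type identity such as $\sum_{i}(-1)^{i}\binom{a}{i}\binom{b}{n-i}=(-1)^{n}\binom{b-a}{n}$, to the desired $\binom{\r(D)-t+\c(D)-s}{\c(D)-s}$.

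For a non-forest diagram $D$, a cycle in its associated graph combined with the Pieri constraints (distinctness of one family of indices and monotonicity of the other) should force either a repeated factor killed by $x_{ij}^{2}=0$ or pairs of Pieri sequences whose signed contributions cancel across different values of $i$. For forests with $\r(D)<t$ or $\c(D)<s$ the same multiplicity count should produce a binomial with a negative entry, vanishing by the usual convention and recovering $c_{D}^{\l}=0$ from (\ref{cdl}). The main obstacle I anticipate is exactly this quantitative refinement of Lemma~\ref{lemma:classes}: that lemma only equates the \emph{sets} of classes populated for consecutive values of $i$, whereas the proof of Theorem~\ref{thm:hooks} needs the exact multiplicity $m_{i}(D,L)$ in closed form and the recognition of the resulting signed sum as precisely $c_{D}^{\l}$. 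Translating the forest structure (in particular its row- and column-leaves) into this closed-form count is the combinatorial heart of the argument.
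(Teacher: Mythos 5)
Your plan reduces to the full hook Jacobi--Trudi sum $s_{(s,1^{t-1})}=\sum_i(-1)^i e_{t-1-i}h_{s+i}$ and a direct evaluation of the alternating multiplicities $m_i(D,L)$, whereas the paper inducts on the number of columns using the single two-term Pieri identity $e_l h_v=s_{(v+1,1^{l-1})}+s_{(v,1^{l})}$, so that only one product $e_l(\t)h_v(\t)$ ever has to be analyzed and no signs appear; the coefficient identity needed there is just $c_D^\l+c_D^\b=\binom{c(D)}{\c(D)-v}$, with $c(D)$ the number of components of the forest, obtained by observing that in each component all boxes but one are forced into the $e$-part or the $h$-part. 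Your route is in principle a telescoped version of the same computation, but as written it has two genuine gaps. First, the closed form for $m_i(D,L)$ --- which you correctly identify as the heart of the matter --- is not supplied, and it is exactly there that the forest structure enters; without it the Vandermonde collapse is only a hope.

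Second, and more seriously, the disposal of the ``extra'' terms is wrong in mechanism. The monomials in $e_{t-1-i}(\t)h_{s+i}(\t)$ that do not come from forest labelings in the relevant classes do not die by $x_{ij}^2=0$, nor by signed cancellation: within a fixed $i$ there are no signs at all, and you give no bijection matching such terms between products with different $i$. In the paper these terms vanish because they assemble into instances of $e_a^{i_1,\dots,i_a}h_b^{j_1,\dots,j_b}=0$ evaluated on induced sub-rectangles (Corollary \ref{kill-induced}), which rests on Lemma \ref{lemma:rectangle_fit}; this algebraic input is indispensable and is absent from your sketch. A smaller error: for forests with $\r(D)<t$ or $\c(D)<s$ the binomial $\binom{\r(D)-t+\c(D)-s}{\c(D)-s}$ does not automatically vanish by any convention (take $\c(D)=s$ and $\r(D)<t$: it equals $1$); the coefficient is $0$ there because such a $D$ with $s+t-1$ boxes admits no labeling with respect to $\l$, so $\L(D,\l)$ is empty, not because of a negative entry in a binomial coefficient.
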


\noindent {\bf Remark.} The coefficient $c_D^\l$ in  Theorem \ref{thm:hooks} is equal to the multiplicity of the Specht module $S^\lambda$ in the Specht module $S^D$ (when $D$ is a forest) which can be seen, as  Liu \cite{liu2} pointed out, as a consequence of   \cite[Theorem 4.2]{liu}. This appears to be a coincidence, though it would be amazing to discover a conceptual connection between the expansion \eqref{exp} and representations of the symmetric group.

\medskip

Before proceeding to the proof of Theorem \ref{thm:hooks} we state a few lemmas which we use in it.

\noshow{ We leave the proof of Lemma \ref{classes} to the interested reader.}

 \bl \label{lemma:rectangle_fit}
 Let $\l$ be a partition that does not fit into a $a\times b$ rectangle. Then, $$s_\l(\t_1, \ldots, \t_a)=0 \text{ in } \mathcal{E}_{a+b}.$$
 \el
 
 \proof 
 The statement follows readily from Theorem \ref{th:pieri} for elementary and homogeneous symmetric functions, namely $e_k(\t_1, \ldots, \t_a)=0 \text{ and } h_m(\t_1,\ldots,\t_b)=0 \text{ in } \mathcal{E}_{a+b}$ for $k>a$ and  $m>b$.
 Using the Jacobi-Trudi determinant expansion and its dual for any Schur function,
 $$s_{\lambda}=\det [ h_{\lambda_i-i+j}]_{i,j=1}^n = \det[ e_{\lambda'_i -i+j}]_{i,j=1}^n,$$
 we see that if $\lambda_1>b$  or $\lambda'=l(\lambda)>a$ the top row of the first matrix or the first column of the second, and hence the determinant, is 0.   \qed
 
 \begin{corollary} \label{kill} $e_a h_b(\t_1, \ldots, \t_a)=0 \text{ in } \mathcal{E}_{a+b}.$
 \end{corollary}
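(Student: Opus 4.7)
The plan is to reduce the product $e_a h_b$ to a single hook Schur polynomial and then invoke Lemma~\ref{lemma:rectangle_fit}. First I would apply the classical Pieri identity
\[
e_a \cdot h_b \;=\; s_{(b+1,\,1^{a-1})} \;+\; s_{(b,\,1^a)},
\]
which arises by expanding $s_{(1^a)}\cdot s_{(b)}$ via the Pieri rule (the only partitions obtained by adjoining a horizontal $b$-strip to the column shape $(1^a)$ are these two) and which holds for symmetric polynomials in any number of variables.

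Next I would specialize both sides to $a$ variables and evaluate at the Dunkl elements $\theta_1,\ldots,\theta_a$. The second summand $s_{(b,\,1^a)}$ is indexed by a partition with $a+1$ nonzero parts, so it vanishes identically already as a polynomial in $a$ variables; in particular $s_{(b,\,1^a)}(\theta_1,\ldots,\theta_a)=0$ in $\mathcal{E}_{a+b}$ for free. The problem is thereby reduced to showing $s_{(b+1,\,1^{a-1})}(\theta_1,\ldots,\theta_a)=0$ in $\mathcal{E}_{a+b}$.

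Finally, the hook $\lambda=(b+1,1^{a-1})$ has width $b+1>b$, so it fails to fit inside an $a\times b$ rectangle. Lemma~\ref{lemma:rectangle_fit} then immediately gives $s_\lambda(\theta_1,\ldots,\theta_a)=0$ in $\mathcal{E}_{a+b}$, completing the proof. There is no serious obstacle here: the corollary is essentially a one-line consequence of the preceding lemma combined with a well-known Pieri identity, and the only verification needed is that the hook's first part genuinely exceeds the width of the ambient rectangle.
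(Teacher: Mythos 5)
Your proof is correct and follows essentially the same route as the paper: both apply the Pieri identity $e_a h_b = s_{(b+1,1^{a-1})} + s_{(b,1^a)}$ and then kill each hook via Lemma~\ref{lemma:rectangle_fit} because neither shape fits in an $a\times b$ rectangle. Your observation that $s_{(b,1^a)}$ already vanishes as a polynomial in $a$ variables is a special case of the same lemma (too many rows), so it is not a genuinely different argument.
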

 \proof By the Pieri rule $e_a h_b=s_{(b+1, 1^{a-1})}+s_{(b, 1^{a})}$, and the shapes $(b+1, 1^{a-1})$ and $(b, 1^{a})$ do not fit into a $a\times b$ rectangle.
 \qed
 
 \medskip
 
Next we consider several induced objects in the rectangle $R_{k \times (n-k)}$. Namely, for $\{i_1, \ldots, i_a\} \subset \{1, \ldots, k\}$, $\{j_1, \ldots, j_b\} \subset \{k+1, \ldots, n\}$, with $|\{i_1, \ldots, i_a\}|=a$ and $|\{j_1, \ldots, j_b\}|=b$  we call $[i_1, \ldots, i_a]\times [j_1, \ldots, j_b]$, which denotes the squares in the intersection of a row indexed by $i_l$ and $j_m$, $l \in [a]$, $j \in [b]$,  an \textit{induced $a\times b$ rectangle}. Furthermore,   $e_a^{i_1, \ldots, i_a}=e_a(x_{i_1}, \ldots, x_{i_a})$ is the \textit{induced elementary symmetric function} and $h_b^{j_1, \ldots, j_b}=h_b(x_{j_1}, \ldots, x_{j_b})$ is the \textit{induced homogeneous symmetric function} and  $\mathcal{E}_{a+b}^{[i_1, \ldots, i_a]\times [j_1, \ldots, j_b]}$ the \textit{induced Fomin-Kirillov algebra} in the natural way, with $\t_l^{[i_1, \ldots, i_a]\times [j_1, \ldots, j_b]}$, $l \in [a]$, being the induced Dunkl element. With the above notation we can restate Corollary \ref{kill} as follows.

  \begin{corollary} 
\label{kill-induced} 
We have
$e_a^{i_1, \ldots, i_a} h_b^{j_1,
\ldots, j_b}(\t_1^{[i_1, \ldots, i_a]\times [j_1, \ldots, j_b]}, \ldots,
\t_a^{[i_1, \ldots, i_a]\times [j_1, \ldots, j_b]})=0$ in
$\mathcal{E}_{a+b}^{[i_1, \ldots, i_a]\times [j_1, \ldots, j_b]}$.
\end{corollary}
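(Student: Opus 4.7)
The plan is to deduce Corollary \ref{kill-induced} directly from Corollary \ref{kill} by a relabeling isomorphism. Since $\{i_1 < \cdots < i_a\} \subset [k]$ and $\{j_1 < \cdots < j_b\} \subset \{k+1,\ldots,n\}$, every $i$-index is strictly less than every $j$-index in $[n]$. Let $S = \{i_1,\ldots,i_a,j_1,\ldots,j_b\}$ and let $\phi\colon [a+b] \to S$ be the unique order-preserving bijection, which explicitly is $\phi(l) = i_l$ for $l \in [a]$ and $\phi(a+m) = j_m$ for $m \in [b]$.

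The defining relations of the Fomin--Kirillov algebra $\E_m$ depend only on whether indices of the generators $x_{ij}$ coincide, on the requirement $i<j$, and on the relative ordering of distinct indices in the two- and three-term relations. Because $\phi$ preserves order, the assignment $x_{uv} \mapsto x_{\phi(u)\phi(v)}$ (for $u<v$) extends to an algebra isomorphism $\Phi\colon \E_{a+b} \xrightarrow{\sim} \mathcal{E}_{a+b}^{[i_1,\ldots,i_a]\times[j_1,\ldots,j_b]}$. Under $\Phi$, the standard Dunkl element $\theta_l = -\sum_{u<l} x_{ul} + \sum_{v>l} x_{lv}$ of $\E_{a+b}$ is sent precisely to the induced Dunkl element $\t_l^{[i_1,\ldots,i_a]\times[j_1,\ldots,j_b]}$ (the sign conventions match because $\phi$ is order-preserving), and the standard polynomials $e_a$ and $h_b$ are sent to their induced counterparts $e_a^{i_1,\ldots,i_a}$ and $h_b^{j_1,\ldots,j_b}$.

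Applying $\Phi$ to the identity $e_a h_b(\theta_1,\ldots,\theta_a) = 0$ of Corollary \ref{kill}, which holds in $\E_{a+b}$, then yields the identity in $\mathcal{E}_{a+b}^{[i_1,\ldots,i_a]\times[j_1,\ldots,j_b]}$ asserted in Corollary \ref{kill-induced}. I do not expect a serious obstacle: the entire argument is a bookkeeping verification that the isomorphism $\Phi$ intertwines the Dunkl elements and the symmetric-function evaluations with their induced counterparts. The only point of care is to parse the notation of Corollary \ref{kill-induced} consistently with that of Corollary \ref{kill}, so that the two identities do indeed correspond under $\Phi$; the statement that the induced algebra is defined ``in the natural way'' should make this parsing unambiguous.
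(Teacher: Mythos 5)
Your proposal is correct and matches the paper's (implicit) reasoning: the paper offers no separate proof, presenting the corollary as a direct restatement of Corollary \ref{kill} under the natural order-preserving relabeling of indices, which is exactly the isomorphism $\Phi$ you construct. Spelling out that the Fomin--Kirillov relations and the Dunkl elements depend only on the relative order of indices is precisely the bookkeeping the paper leaves to the reader.
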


\medskip

\noindent \textit{Proof of  Theorem \ref{thm:hooks}.} We proceed by induction on the number of columns $\c(\l)$ of $\l$. When $\c(\l)=1$ the statement was given in Theorem \ref{th:pieri}. Assume that the statement is true for $\c(\l) \leq v$. We prove that it  is also true for all hooks $\l$ with $\c(\l)=v+1$. To do this we use Pieri's rule:  \begin{equation} \label{pieri} e_l h_v=s_{(1^l)}h_v=s_{(v+1, 1^{l-1})}+s_{(v, 1^{l})}.
\end{equation}

Let $\l=(v+1, 1^{l-1})$ and $\b=(v, 1^{l})$.
If we evaluate equation \eqref{pieri} at $\t$ and expand $e_l$ and $h_v$ according to \cite[Theorem 3.1]{P} we obtain  
\begin{equation} 
(\sum_{\substack{i_1, \ldots, i_l \neq \\ j_1\leq \cdots \leq j_l}} x_{i_1j_1} \cdots x_{i_lj_l})(\sum_{\substack{i_{l+1}\leq \cdots \leq i_{l+v} \\ j_{l+1}, \ldots, j_{l+v} \neq}} x_{i_{l+1}j_{l+1}} \cdots x_{i_{l+v}j_{l+v}})=s_{\l}(\t)+s_{\b}(\t)
\end{equation}

and we want to prove that 

\begin{equation} \label{ad}
\begin{array}{l}
\displaystyle
(\sum_{\substack{i_1, \ldots, i_l \neq \\ j_1\leq \cdots \leq j_l}} x_{i_1j_1}
\cdots x_{i_lj_l})(\sum_{\substack{i_{l+1}\leq \cdots \leq i_{l+v} \\ j_{l+1},
\ldots, j_{l+v} \neq}} x_{i_{l+1}j_{l+1}} \cdots x_{i_{l+v}j_{l+v}})\\[.4in]
\displaystyle
\qquad\qquad =\sum_{D \in \D_{k\times(n-k)}} (c_D^\l (\sum_{D_L \in \L(D, \l)}x^{D_L})+c_D^\b
(\sum_{D_L \in \L(D, \b)}x^{D_L})). 
\end{array} 
\end{equation}

Given the properties of $c_D^\l, c_D^\b$ and $\L(D, \l), \L(D, \b)$ (in light of Lemma \ref{lemma:classes}) we can rewrite \eqref{ad} as 

\begin{equation} \label{ad1}
\begin{array}{l}
\displaystyle
(\sum_{\substack{i_1, \ldots, i_l \neq \\ j_1\leq \cdots \leq j_l}} x_{i_1j_1}
\cdots x_{i_lj_l})(\sum_{\substack{i_{l+1}\leq \cdots \leq i_{l+v} \\ j_{l+1},
\ldots, j_{l+v} \neq}} x_{i_{l+1}j_{l+1}} \cdots x_{i_{l+v}j_{l+v}})
\\[.4in]
\displaystyle
\qquad\qquad =
\sum_{D
\in \D_{k\times(n-k)}} (c_D^\l +c_D^\b)(\sum_{D_L \in \L(D, \l) \cup  \L(D, \b)
}x^{D_L}), 
\end{array}
\end{equation}
where for the forests $D$ which have at least $v+1$ columns and $l+1$ rows, and
which can be labeled with respect to $\l$ and $\b$ as prescribed by Lemma
\ref{lemma:classes}, we pick the same representative labelings in $ \L(D, \l)$
and $\L(D, \b)$.

Then, if forest $D$  has exactly  $v$ columns or $l$ rows, but can be labeled with respect to $\b$ or $\l$, respectively,  as prescribed by Lemma \ref{lemma:classes}, we have that $c_D^\l +c_D^\b=1$. If on the other hand we have a labeling $D_L \in  \L(D, \l) \cap  \L(D, \b)$, then 
using \eqref{cdl} we obtain that 
\begin{align}
c_D^\l+c_D^\b&={{\r(D)-l+\c(D)-(v+1)}\choose{\c(D)-(v+1)}}+{{\r(D)-(l+1)+\c(D)-v}\choose{\c(D)-v}}\\ &={{\r(D)+\c(D)-(l+v)}\choose{\c(D)-v}}={{c(D)}\choose{\c(D)-v}},
\end{align}

where $c(D)$ denotes the number of components of $D$.

Thus we can rewrite \eqref{ad1} as 
\be \label{ad2}
(\sum_{\substack{i_1, \ldots, i_l \neq \\ j_1\leq \cdots \leq j_l}} x_{i_1j_1} \cdots x_{i_lj_l})(\sum_{\substack{i_{l+1}\leq \cdots \leq i_{l+v} \\ j_{l+1}, \ldots, j_{l+v} \neq}} x_{i_{l+1}j_{l+1}} \cdots x_{i_{l+v}j_{l+v}})=\ee
\be \label{ad3}
\sum_{D \in \D_{k\times(n-k)}} \left({{c(D)}\choose{\c(D)-v}}(\sum_{D_L \in \L(D, \l) \cap  \L(D, \b) }x^{D_L})+ (\sum_{D_L \in \L(D, \l) \triangle  \L(D, \b) }x^{D_L})\right ).
\ee

We now show that the coefficient of $x^{D_L}$, $D_L \in \L(D, \l) \cup  \L(D, \b)$, is the same  in  \eqref{ad2} and \eqref{ad3}, and that the remainder of the terms in \eqref{ad2} sum to zero, thereby proving the equality of \eqref{ad2} and \eqref{ad3}.

Consider first the case that $D_L \in \L(D, \l) \triangle  \L(D, \b)$. Then the coefficient of $x^{D_L}$ in   
 \eqref{ad3} is $1$ and the forests $D$ are such that $D$  has exactly  $v$ columns or $l$ rows, but can be labeled with respect to $\b$ or $\l$, respectively,  as prescribed by Lemma \ref{lemma:classes}. It is not hard to see then that the coefficient of $x^{D_L}$ (considered modulo commutations) in   
 \eqref{ad3} is also $1$.

Consider the case that $D_L \in \L(D, \l) \cap  \L(D, \b)$. Then the coefficient of $x^{D_L}$ in   
 \eqref{ad3} is ${{c(D)}\choose{\c(D)-v}}$ and the forests $D$ are such that $D$ has at least $v+1$ columns and $l+1$ rows, and $D$ can be labeled with respect to $\l$ and $\b$ as prescribed by Lemma \ref{lemma:classes}.  In order to calculate the coefficient of $x^{D_L}$ (considered modulo commutations) in    \eqref{ad2} we need to decide which variables of $x^{D_L}$ should come from $e_l$ (the first sum in  \eqref{ad2}) and which from $h_v$ (the second sum in  \eqref{ad2}) in  \eqref{ad2}. Considering variables as squares in the $k \times (n-k)$ rectangle, note that all but one square in each component of $D$ is a priori forced to be in $e_l$ or $h_v$ because of the conditions on the $i$'s and $j$'s, and this one square can go into either one. It is then easy to count how many squares are already assigned to $e_l$ (or $h_v$) and determine that we can pick   out exactly ${{c(D)}\choose{\c(D)-v}}$ terms  in   
 \eqref{ad2}  which are equal to $x^{D_L}$.

 It remains to show that  all the other terms on the left hand side sum to zero. This follows as  all the terms that are not of the form $x^{D_L}$, $D_L \in \L(D, \l) \cup  \L(D, \b)$ are part of a sum of terms which sum to zero as a consequence of Corollary \ref{kill-induced}.  \qed

\section{Action on the quantum cohomology}

\label{sec:hom}

Recall that $s_{ij}$ is the transposition of~$i$ and~$j$ in~$S_n$,
$s_i=s_{i\,i+1}$ is a Coxeter generator,
and $q_{ij}=q_i q_{i+1}\cdots q_{j-1}$, for $i<j$.
Define the $\Z[q]$-linear operators $t_{ij}$, $1\leq
i<j \leq n$, acting on the quantum cohomology ring $\QH^*(\Fl_n,\Z)$ by 
\begin{equation}
\label{eq:tij}
  t_{ij}(\s_w) =
  \left\{ 
    \begin{array}{ll}
      \s_{w s_{ij}} & \textrm{if } \l(w s_{ij})=\l(w)+1\,,\\[.05in]
      q_{ij}\, \s_{w s_{ij}} & \textrm{if } \l(w s_{ij})= \l(w) -2(j-i)+1
           \,,\\[.05in]
      0   & \textrm{otherwise.}
    \end{array}
  \right.
\end{equation}
By convention, $t_{ij}=-t_{ji}$, for $i>j$, and  $t_{ii}=0$.

\old{Quantum Monk's formula (Theorem~\ref{th:monk}) can be stated as 
saying that the quantum product of~$\s_{s_m}$ 
and~$\s_w$ is equal to
\[
  \s_{s_m}*\s_w=\sum_{a\leq m<b} t_{ab}(\s_w)\,.
\]
}

The relation between the algebra~$\E_n^q$ and quantum cohomology 
of~$\Fl_n$
is justified by the following lemma, 
which is proved by a direct verification.

\begin{lemma} {\rm \cite[Proposition~12.3]{FK}} \
The operators $t_{ij}$ given by~{\rm (\ref{eq:tij})} satisfy the
relations in the algebra~$\E_n^p$ 
with $x_{ij}$ replaced by $t_{ij}$, $p_{i\,i+1}=q_i$,
and $p_{ij}=0$, for $|i-j|\geq 2$,
\end{lemma}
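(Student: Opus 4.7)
The plan is to verify, relation by relation, that the $t_{ij}$ satisfy the defining relations of $\E_n^p$ when acting on the basis $\{\sigma_w : w \in S_n\}$ of $\QH^*(\Fl_n,\Z)$. Because the $t_{ij}$ are already $\Z[q]$-linear, it suffices to check each relation on an arbitrary Schubert class $\sigma_w$, and because $\sigma_w$ is a basis, the operator identity follows from the coefficient identity. The strategy mirrors the proof of the analogous (classical) statement for the Bruhat operators in $\E_n$, but with extra bookkeeping for the ``wrap around'' terms carrying the monomials $q_{ij}=q_i q_{i+1}\cdots q_{j-1}$.

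First I would dispose of the quadratic relation $t_{ij}^2=p_{ij}$. The key point is that when we apply $t_{ij}$ to $\sigma_w$ and get a nonzero result, the output is a scalar multiple of $\sigma_{ws_{ij}}$, and applying $t_{ij}$ a second time we must compare $\ell(w)$ to $\ell(ws_{ij})$. A quick case analysis of the two defining cases of \eqref{eq:tij} shows: for $j-i\geq 2$, neither case for the second application can be triggered (the length difference forces $j-i=1$ in the quantum case), so $t_{ij}^2=0=p_{ij}$; for $j=i+1$, the ``classical'' branch followed by the ``quantum'' branch (or vice versa) produces exactly $q_i\sigma_w=p_{i\,i+1}\sigma_w$, regardless of the sign of $w_i-w_{i+1}$. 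The commutation $[p_{ij},p_{kl}]=[p_{ij},t_{kl}]=0$ is immediate since the $p_{ij}$ are scalars.

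Next I would handle the disjoint commutation $t_{ij}t_{kl}=t_{kl}t_{ij}$ for pairwise distinct $i,j,k,l$. Since $s_{ij}s_{kl}=s_{kl}s_{ij}$ in $S_n$, the two operators produce the same Schubert class (up to scalar) whenever both applications are nonzero. What needs checking is (a) that both orders yield nonzero results in exactly the same cases, and (b) that the scalar coefficients agree. This reduces to the observation that the length changes $\ell(w)\to\ell(ws_{ij})\to\ell(ws_{ij}s_{kl})$ and $\ell(w)\to\ell(ws_{kl})\to\ell(ws_{kl}s_{ij})$ consist of independent increments (classical or quantum) with disjoint support, so the multiplicative factors $q_{ij}$ and $q_{kl}$ appear symmetrically in either order.

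The most delicate step is the braid-type relations $t_{ij}t_{jk}=t_{ik}t_{ij}+t_{jk}t_{ik}$ and $t_{jk}t_{ij}=t_{ij}t_{ik}+t_{ik}t_{jk}$ for $i<j<k$. Here I would apply both sides to $\sigma_w$ and classify by the relative order of $w_i,w_j,w_k$ and by which pairs of transpositions produce classical versus quantum length changes (using the criterion that the length drops by $2(j-i)-1$ in the quantum case). In the purely classical regime this is the standard verification that Bruhat operators satisfy the $\E_n$ relations; the new content is the quantum regime, where the identity $q_{ij}q_{jk}=q_{ik}$ (when these intervals compose consistently) must balance the quantum contributions on the two sides. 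The main obstacle will be ensuring that each monomial $q_{ab}\sigma_u$ appearing on one side appears with the same coefficient on the other: since the two sides differ only in the order in which the classical-or-quantum elementary moves are performed, I expect the equality to follow from enumerating the $O(1)$ possible length profiles of $(w,\,ws_{ij},\,ws_{ij}s_{jk})$ and $(w,\,ws_{ik},\,ws_{ik}s_{ij})$ and $(w,\,ws_{jk},\,ws_{jk}s_{ik})$, and matching quantum factors pair by pair. Once every case is tabulated, the lemma follows.
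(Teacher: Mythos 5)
First, note that the paper itself offers no proof of this lemma: it is quoted from Fomin--Kirillov (their Proposition 12.3) with only the remark that it ``is proved by a direct verification.'' So your plan --- checking each defining relation of $\E_n^p$ on the basis $\{\sigma_w\}$ of $\QH^*(\Fl_n,\Z)$ --- is exactly the intended route, and your treatment of $t_{ij}^2=p_{ij}$ is correct: for $j-i\ge 2$ neither branch of \eqref{eq:tij} can fire twice in succession, while for $j=i+1$ exactly one branch always fires and the two applications compose to multiplication by $q_i$.

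There are, however, two genuine gaps. The reason you give for $t_{ij}t_{kl}=t_{kl}t_{ij}$ (distinct indices) --- that the length changes are ``independent increments with disjoint support'' --- is false. Writing $\ell(us_{ij})-\ell(u)=\mathrm{sgn}(u_j-u_i)\,(2m+1)$, where $m$ counts positions $p$ with $i<p<j$ whose value lies strictly between $u_i$ and $u_j$, one sees that if exactly one of $k,l$ lies in the interval $(i,j)$ then right multiplication by $s_{kl}$ changes a value inside the window and can change $m$. Concretely, for $u=2143$ one has $\ell(us_{13})-\ell(u)=+1$ but $\ell(us_{24}s_{13})-\ell(us_{24})=+3$. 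The relation still holds there (both orders give $0$, because the obstruction appears symmetrically in the two windows), but that compensation is exactly what must be proved, and your argument does not supply it. Likewise, for the triple relations $t_{ij}t_{jk}=t_{ik}t_{ij}+t_{jk}t_{ik}$ you correctly isolate the quantum regime and the identity $q_{ij}q_{jk}=q_{ik}$ as the crux, but you only assert that you ``expect'' the case enumeration to close up. That enumeration --- especially the mixed cases where one side of the identity uses a classical edge and the other a quantum edge of the quantum Bruhat graph --- is the entire content of the cited Proposition 12.3, so as written the proposal defers rather than proves the hardest step.
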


Thus the algebra~$\E_n^q$ acts on~$\QH^*(\Fl_n,\Z)$ by 
$\Z[q]$-linear transformations
\[
  x_{ij}:\,\s_w\longmapsto t_{ij}(\s_w)\,.
\]

\old{Monk's formula is also equivalent to the claim that the Dunkl element
$\hat{\theta}_i$ acts on the quantum cohomology of~$\Fl_n$ as the operator of
multiplication by~$x_i$,  the latter is defined via the
isomorphism~(\ref{eq:q-factor}).

Let us denote $c(k,m)=s_{m-k+1}s_{m-k+2}\cdots s_m$ 
and $r(k,m)=s_{m+k-1}s_{m+k-1}\cdots s_m$.
These are two cyclic permutations such that 
$c(k,m)=(m-k+1,m-k+2,\dots,m+1)$ and $r(k,m)=(m+k,m+k-1,\dots,m)$.

The following statement was geometrically proved in~\cite{ciocan}
(cf.\ also~\cite{FGP}).  For the reader's convenience and for 
consistency we show how to deduce it directly from Monk's formula.
}

The following lemma follows directly from equations \eqref{schub_e} and \eqref{schub_e_q}. It is the key to showing that our nonnegative expansions of certain Schubert polynomials evaluated at the Dunkl elements imply that the same expansions are equal to the evaluation of the corresponding $p$-quantum Schubert polynomials  $ \frak{S}_w^p$  (and so in particular quantum Schubert polynomials  $ \frak{S}_w^q$) at the Dunkl elements.

\begin{lemma}
\label{obv} 
Suppose that the identity 
$$
f(\x)=F(f_1(\x), \ldots, f_k(\x)), 
$$
holds,  where $f$ and the $f_i$'s are Schubert 
polynomials and $F$ is a
polynomial in $k$ variables. Suppose that there are expansions of $f_i(\t)$ and
$f_i^p(\t)$ which are in $\E_n^+$ and are equal to each other. If the expansion
we obtain for $f(\t)$ by evaluating $F$ at the above mentioned expansions of
$f_i(\t)$'s is in $\E_n^+$ without involving the relation $x_{ij}^2=0$, then there is an identical expansion of $f^p(\t)$.
\end{lemma}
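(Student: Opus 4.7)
The plan is to argue that the entire computation producing the nonnegative expansion of $f(\theta)$ in $\E_n$ can be transported verbatim to $\E_n^p$, where it computes $f^p(\theta)$. The main tool is Theorem \ref{th:pieri}, which says that for any $k$ and any $I \subset [n]$, the element $E_k(\theta_I;p) \in \E_n^p$ and the element $e_k(\theta_I) \in \E_n$ are given by the \emph{same} monomial sum in the generators $x_{ij}$, a sum which involves no $x_{ij}^2$ and no $p_{ij}$. Consequently, for any basis monomial $e_{i_1,\ldots,i_{n-1}}$, evaluation at $(\theta_1,\ldots,\theta_n)$ in $\E_n$, and evaluation of the corresponding $E^p_{i_1,\ldots,i_{n-1}}$ at $(\theta_1,\ldots,\theta_n)$ in $\E_n^p$, are represented by the same formal polynomial $P_{i_1,\ldots,i_{n-1}}$ in the free algebra generated by the $x_{ij}$ modulo only the commutation relation $x_{ij}x_{kl}=x_{kl}x_{ij}$ for disjoint $\{i,j\},\{k,l\}$.

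Given the $e$-basis expansion $f=\sum \alpha_{\underline{i}}\,e_{\underline{i}}$, the definition of $f^p$ together with the previous observation gives $f(\theta)$ in $\E_n$ and $f^p(\theta)$ in $\E_n^p$ both represented by the same formal signed sum $P_f:=\sum \alpha_{\underline{i}} P_{\underline{i}}$. The same applies to each $f_i$ and $f_i^p$: both are represented by the same formal expression, which by hypothesis further reduces (using only commutations) to the common nonnegative monomial sum $E_i$. Thus in the free-algebra-modulo-commutations sense, the substitution $F(E_1,\ldots,E_k)$ is a single formal expression that represents $F(f_1(\theta),\ldots,f_k(\theta))=f(\theta)$ in $\E_n$ and $F(f_1^p(\theta),\ldots,f_k^p(\theta))$ in $\E_n^p$, the latter using the fact that the Dunkl elements commute in $\E_n^p$.

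The hypothesis is precisely that $F(E_1,\ldots,E_k)$ reduces to the nonnegative monomial sum $N$ using only commutations. Since those commutation relations hold verbatim in $\E_n^p$, the same chain of rewrites produces $F(f_1^p(\theta),\ldots,f_k^p(\theta))=N$ in $\E_n^p$. To finish I will identify $F(f_1^p(\theta),\ldots,f_k^p(\theta))$ with $f^p(\theta)$ in $\E_n^p$: this identification is not a general fact about the non-multiplicative map $g\mapsto g^p$, but rather follows from matching both to the common formal Pieri expansion $P_f$ described above, using the polynomial identity $f=F(f_1,\ldots,f_k)$ (which forces the corresponding $e$-basis coefficients on both sides to agree) and the fact that no $x_{ij}^2$ reductions were invoked, so the two elements of $\E_n^p$ are equal as formal expressions.

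The routine parts are step one (identity of $P_{\underline{i}}$ for the two algebras, which is immediate from Theorem \ref{th:pieri}) and the transfer of the reduction (which is automatic since both algebras satisfy the commutation relations). The main obstacle is the final identification of $F(f_1^p(\theta),\ldots,f_k^p(\theta))$ with $f^p(\theta)$ in $\E_n^p$, precisely because $g\mapsto g^p$ is only $\C$-linear, not a ring homomorphism; the key point to make explicit is that the absence of $x_{ij}^2$ reductions is exactly what allows the formal Pieri expansion of $f^p(\theta)$ (coming from the $e$-expansion of $f$) to coincide with the formal expansion coming from $F(E_1,\ldots,E_k)$, so that the discrepancy between these two a priori different elements of $\E_n^p$ collapses.
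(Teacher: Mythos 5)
Your opening moves are fine and are exactly what the paper's one-sentence justification (``follows directly from \eqref{schub_e} and \eqref{schub_e_q}'') gestures at: by Theorem~\ref{th:pieri} each factor of $E^p_{\underline{i}}(\theta)$ in $\E_n^p$ and of $e_{\underline{i}}(\theta)$ in $\E_n$ is given by one and the same formal sum of monomials in the $x_{ij}$, hence $f^p(\theta)$ and $f(\theta)$ (and likewise each $f_i^p(\theta)$ and $f_i(\theta)$) are represented by a common formal expression, and any rewrite not invoking $x_{ij}^2=0$ is valid verbatim in $\E_n^p$ (note the hypothesis permits the relations $x_{ij}x_{jk}=x_{ik}x_{ij}+x_{jk}x_{ik}$ as well, not only commutations, but these also hold in $\E_n^p$, so that is harmless). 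The genuine gap is your final identification of $F(f_1^p(\theta),\ldots,f_k^p(\theta))$ with $f^p(\theta)$ in $\E_n^p$. Your justification---that both ``match'' the common formal expansion $P_f$ and that the absence of $x_{ij}^2$ reductions makes the two elements ``equal as formal expressions''---conflates two different things. The hypothesis of the lemma constrains only the reduction of $F(E_1,\ldots,E_k)$ to the nonnegative expression $N$; it says nothing about how $P_f$, the formal expansion of $f^p(\theta)$ obtained from \eqref{schub_e_q} and Theorem~\ref{th:pieri}, is related to $F(E_1,\ldots,E_k)$. These are in general distinct formal expressions, and their equality in $\E_n$ is precisely the statement $f(\theta)=F(f_1(\theta),\ldots,f_k(\theta))$, which is a consequence of the full set of relations of $\E_n$; nothing in the hypothesis tells you that $x_{ij}^2=0$ can be avoided in deriving it. The agreement of $e$-basis coefficients of $f$ and of $F(f_1,\ldots,f_k)$ is automatic (they are the same polynomial) and only yields $f^p$, not $F(f_1^p,\ldots,f_k^p)$, since $g\mapsto g^p$ is not multiplicative.

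That this is a real obstruction and not a matter of exposition is shown by the smallest example: $n=3$, $f=\S_{312}=x_1^2$, $f_1=f_2=\S_{213}=x_1$, $F(y_1,y_2)=y_1y_2$. Here $f^q=x_1^2-q_1$, so $f^q(\theta)=\theta_1^2-q_1=x_{12}x_{13}+x_{13}x_{12}$ in $\E_3^q$, whereas $F(f_1^q(\theta),f_2^q(\theta))=\theta_1^2=q_1+x_{12}x_{13}+x_{13}x_{12}$; the two differ by $q_1$ (by $p_{12}+p_{13}$ in $\E_3^p$). In this example the lemma's hypothesis fails, since reducing $(x_{12}+x_{13})^2$ to a nonnegative form uses $x_{ij}^2=0$, so it does not contradict the lemma itself---but it shows that the identity you assert at the end is false in general, and therefore any proof must extract it from the hypothesis. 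Concretely, you would need to exhibit a derivation of $P_f=F(E_1,\ldots,E_k)$ (or directly of $P_f=N$) that never uses $x_{ij}^2=0$; your proposal asserts that the discrepancy ``collapses'' but never produces such a derivation, and the only place you invoke the hypothesis (transporting the rewrites of $F(E_1,\ldots,E_k)$ into $N$) cannot produce it. Since this is exactly the point where the quantum corrections could enter, the proof is incomplete at its crucial step.
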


\begin{lemma}
Let $\l=(s,1^{t-1})$. The coset of the polynomial $s_{\lambda}(x_1,\dots,x_m;q)$ 
in the quotient ring~{\rm (\ref{eq:q-factor})} corresponds to the
Schubert class~$\s_{w(\l,k)}$ under the isomorphism~{\rm (\ref{eq:q-factor}).}
\end{lemma}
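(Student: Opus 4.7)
The plan is to reduce this lemma to the general theorem of Fomin--Gelfand--Postnikov \cite{FGP}: the coset of the $q$-quantum Schubert polynomial $\mathfrak{S}^q_w$ represents the Schubert class $\sigma_w$ in $\QH^*(\Fl_n)$ under the isomorphism~(\ref{eq:q-factor}). By the definition~\eqref{schub_e_q}, specialized at $p_{i\,i+1}=q_i$ and $p_{ij}=0$ for $|i-j|\geq 2$, the $p$-quantum Schubert polynomial $\mathfrak{S}^p_{w(\lambda,k)}$ becomes $\mathfrak{S}^q_{w(\lambda,k)}$; and by the definition $s^p_\lambda(x_1,\ldots,x_k) = \mathfrak{S}^p_{w(\lambda,k)}$ adopted in this paper (where the variable count $m$ in the lemma statement is to be read as $k$), the $q$-quantum Schur polynomial $s_\lambda(x_1,\ldots,x_k;q)$ coincides with $\mathfrak{S}^q_{w(\lambda,k)}$. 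Applying the FGP theorem for $w = w(\lambda,k)$ with $\lambda = (s,1^{t-1})$ a hook fitting in the $k\times(n-k)$ rectangle then yields the claim.

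For a self-contained argument specific to hooks I would instead start from the classical Jacobi--Trudi identity
\[
s_{(s,1^{t-1})}(x_1,\ldots,x_k) = \sum_{i=0}^{t-1} (-1)^i\, h_{s+i}(x_1,\ldots,x_k)\, e_{t-1-i}(x_1,\ldots,x_k),
\]
replace each factor by its $p$-quantum analogue, and appeal to the uniqueness of the Lascoux--Sch\"utzenberger expansion \eqref{schub_e}--\eqref{schub_e_q}: the quantized right-hand side must equal $\mathfrak{S}^p_{w(\lambda,k)}$ up to the straightening relations in the $e_{i_1,\ldots,i_{n-1}}$ basis. Setting $p_{i\,i+1}=q_i$ and $p_{ij}=0$ for $|i-j|\geq 2$ then gives the $q$-quantum statement. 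The classical Newton-type relations $\sum_{i=0}^n (-1)^i e_i h_{n-i} = 0$ for $n\geq 1$, which make the Jacobi--Trudi expansion consistent, become automatic consequences of $E_i \equiv 0$ modulo the ideal $\langle E_1,\ldots,E_n\rangle$ in~(\ref{eq:q-factor}).

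The main obstacle is bookkeeping rather than substantive mathematics: one must keep track of which quantum parameters can appear (only $q_1,\ldots,q_{k-1}$, since the hook $\lambda$ lives in the first $k$ columns) and verify that the $q$-deformed Jacobi--Trudi polynomial agrees with $\mathfrak{S}^q_{w(\lambda,k)}$ not merely modulo the ideal but as an element of the quotient. Both hurdles dissolve the moment one invokes \cite{FGP} directly, which is the route I would take in the written proof.
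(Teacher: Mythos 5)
Your first argument is correct and is precisely the justification the paper itself relies on: the lemma carries no printed proof (the only proof in the source is a suppressed Monk's‑formula induction for the cyclic permutations $c(k,m)$, $r(k,m)$, i.e.\ the single row/column case inherited from \cite{P}, which does not treat general hooks), because by the paper's definition $s^q_\lambda(x_1,\dots,x_k)=\S^q_{w(\lambda,k)}$ --- the specialization $p_{i\,i+1}=q_i$, $p_{ij}=0$ for $|i-j|\geq 2$ of \eqref{schub_e_q} --- and \cite{FGP} already asserts that the coset of $\S^q_w$ represents $\sigma_w$ under the isomorphism (\ref{eq:q-factor}), a fact quoted verbatim in Section~2. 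So the lemma is essentially definitional, and your first paragraph is the right (and complete) proof. One caution about your ``self-contained'' alternative: the quantization of \cite{FGP} is a linear map defined on the basis $e_{i_1,\dots,i_{n-1}}$ and does not commute with products, so replacing each factor of the Jacobi--Trudi sum $\sum_i(-1)^i h_{s+i}e_{t-1-i}$ by its $p$-quantum analogue and multiplying is not automatically the quantization of $s_\lambda$; making that route rigorous requires the straightening/commutation results of \cite{FGP}, so your decision to defer to \cite{FGP} directly in the written proof is the correct one.
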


\old{\proof  By~(\ref{eq:q-isom}) and~(\ref{eq:E2}), it is enough to check that
\[
  \s_{c(k,m)}=\s_{c(k,m-1)}+(\s_{s_{m}}-\s_{s_{m-1}})*\s_{c(k-1,m-1)}
  + q_{m-1} \s_{c(k-2,m-2)}.
\]
This identity immediately follows from Monk's formula:
\[
  (\s_{s_{m}}-\s_{s_{m-1}})*\s_{c(k-1,m-1)} = 
  (\sum_{b>m} t_{mb} - \sum_{a<m} t_{am})(\s_{c(k-1,m-1)})\,.
\]
The claim about~$\s_{r(k,m)}$ can be proved using a symmetric argument.
\endproof
}

We can now use Lemma \ref{obv} and apply it to the steps of the proof of Theorem \ref{thm:hooks}, to see that it is also true in the $p$-quantum world: 

\begin{theorem} \label{thm-p} Let $\l=(s, 1^{t-1})$ be a hook that fits in a $k\times (n-k)$ rectangle. Then, 
\begin{equation} \S^p_{w(\la,k)}(\t_1,\ldots,\t_n)=s^p_{\l}(\t_1, \ldots, \t_k)=\sum_{D \in \D_{k\times(n-k)}} c_D^\l {\sum_{D_L \in \L(D, \l)}x^{D_L}}, \end{equation}
where   
\begin{equation} \label{cdl1} c_D^\l={{\r(D)-t+\c(D)-s}\choose{\c(D)-s}},\end{equation} 
if  $\r(D)\geq t, \c(D)\geq s \text{ and } D \text{ is a forest}$, and otherwise $c_D^\l=0$. 


\end{theorem}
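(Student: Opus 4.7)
The plan is to mirror the inductive proof of Theorem \ref{thm:hooks} inside the algebra $\E_n^p$, using Lemma \ref{obv} as the transfer principle at every step. The induction runs on the number of columns of the hook $\lambda = (s, 1^{t-1})$. For the base cases $s = 1$ or $t = 1$, the polynomial $s_\lambda$ is $e_t$ or $h_s$, and Theorem \ref{th:pieri} together with its homogeneous analogue already gives the claimed nonnegative expansion directly in $\E_n^p$; one checks that this matches the forest formula since the only contributing forests are single columns or single rows, each with $c_D^\lambda = 1$.

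For the inductive step, assume the $p$-quantum statement for all hooks with at most $v$ columns, and set $\lambda = (v+1, 1^{l-1})$ and $\beta = (v, 1^l)$. The polynomial Pieri identity $e_l \, h_v = s_\lambda + s_\beta$ expresses $s_\lambda$ as a polynomial combination $F(e_l, h_v, s_\beta) = e_l h_v - s_\beta$ of three Schubert polynomials, which is exactly the setup of Lemma \ref{obv}. The inductive hypothesis together with Theorem \ref{th:pieri} supplies identical nonnegative expansions for $e_l^p(\theta; p)$, $h_v^p(\theta; p)$ and $s_\beta^p(\theta; p)$ in $\E_n^p$ as for their classical counterparts in $\E_n$. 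I would then feed these expansions into $F$ and repeat, verbatim, the combinatorial computation carried out in the proof of Theorem \ref{thm:hooks}.

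That computation performs only two kinds of manipulations on the product of the two Pieri expansions: (i) regrouping monomials into classes $D_L \sim_D D_{L'}$ by commuting $x_{ij}$ with $x_{kl}$ for disjoint index pairs, as used throughout Lemma \ref{lemma:classes}, and (ii) cancelling the ``extra'' monomials (those not of the form $x^{D_L}$ for a labeling of a forest with respect to $\lambda$ or $\beta$) via Corollary \ref{kill-induced} applied to induced rectangles. Neither manipulation invokes the relation $x_{ij}^2 = 0$: step (i) is a pure commutation rearrangement, and step (ii) rests on Lemma \ref{lemma:rectangle_fit} and, ultimately, on the fact that $h_k(\theta_I; p) = 0$ whenever $k > |J|$, which holds in $\E_n^p$ as an empty sum in the Pieri formula. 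Therefore the hypothesis of Lemma \ref{obv} is satisfied, and the lemma yields $\S^p_{w(\lambda,k)}(\theta) = \sum_{D} c_D^\lambda \sum_{D_L \in \L(D,\lambda)} x^{D_L}$ in $\E_n^p$, with the same coefficients $c_D^\lambda$ given by the binomial in (\ref{cdl1}).

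The main obstacle will be the careful audit of the cancellation of the extra terms, that is, verifying that Corollary \ref{kill-induced} lifts from $\E_n$ to $\E_n^p$ as a genuine equality rather than merely a congruence modulo the $p_{ij}$. Concretely, one must check that the Jacobi-Trudi reduction in Lemma \ref{lemma:rectangle_fit} uses only the algebraic identity among $e$- and $h$-polynomials (which is independent of the ambient algebra) and that the resulting vanishing $h_k(\theta_I; p) = 0$ follows in $\E_n^p$ directly from the emptiness of the indexing set in Theorem \ref{th:pieri}, without ever squaring a generator. Once this audit is complete, Lemma \ref{obv} closes the induction and yields Theorem \ref{thm-p}.
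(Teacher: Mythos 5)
Your proposal is correct and takes essentially the same route as the paper: the paper proves Theorem \ref{thm-p} in a single line by invoking Lemma \ref{obv} on the steps of the proof of Theorem \ref{thm:hooks}, which is precisely the transfer you carry out. Your explicit audit of the cancellation step --- checking that Corollary \ref{kill-induced} and Lemma \ref{lemma:rectangle_fit} rest only on commutation and on the emptiness of the indexing set in Theorem \ref{th:pieri}, never on $x_{ij}^2=0$ --- is exactly the verification the paper leaves implicit.
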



 Theorem \ref{thm-p} and its proof together with Lemma \ref{obv} imply the following statement.

\begin{corollary} 
\label{cor:pieri-QH}  
  For any $w\in S_n$ the product of Schubert 
  classes~$\s_{w(\l,k)}$ ,  where $\la=(s,1^{t-1})$, 
  and~$\s_w$ in the quantum cohomology ring~$\QH^*(\Fl_n,\Z)$
  is given by the formula
  \begin{equation}
    \s_{w(\la,k)}*\s_w = \sum_{D \in \D_{k\times(n-k)}} c_D^\l {\sum_{D_L \in \L(D, \l)}t^{D_L}}(\s_w), \end{equation}
where   
\begin{equation} \label{cdl2} c_D^\l={{\r(D)-t+\c(D)-s}\choose{\c(D)-s}},\end{equation} 
if  $\r(D)\geq t, \c(D)\geq s \text{ and } D \text{ is a forest}$, and otherwise $c_D^\l=0$. 
    \end{corollary}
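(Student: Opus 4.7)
The plan is to obtain Corollary \ref{cor:pieri-QH} as a direct application of the expansion in Theorem \ref{thm-p}, transported from $\mathcal{E}_n^p$ to the quantum cohomology ring via the standard action of $\mathcal{E}_n^q$ on $\QH^*(\Fl_n,\Z)$. The strategy is essentially a translation: first establish the identity at the level of operators in $\mathcal{E}_n^q$, then apply it to an arbitrary Schubert class $\sigma_w$.

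First I would invoke Theorem \ref{thm-p}, which gives the identity
\[
\S^p_{w(\la,k)}(\t_1,\ldots,\t_n)
= \sum_{D \in \D_{k\times(n-k)}} c_D^\l \sum_{D_L \in \L(D,\l)} x^{D_L}
\]
in the algebra $\mathcal{E}_n^p$. Crucially, the right-hand side is a polynomial in the generators $x_{ij}$ alone and contains no $p_{ij}$'s. Therefore the identity descends, under the projection $\mathcal{E}_n^p \twoheadrightarrow \mathcal{E}_n^q$ which sets $p_{i\,i+1}=q_i$ and $p_{ij}=0$ for $|i-j|\geq 2$, to an identity in $\mathcal{E}_n^q$ for $\S^q_{w(\la,k)}(\t_1,\ldots,\t_n)$.

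Next I would use the key algebraic fact recalled in Section~\ref{sec:back}: the commutative subalgebra of $\mathcal{E}_n^q$ generated by the Dunkl elements $\t_1,\ldots,\t_n$ is canonically isomorphic to $\QH^*(\Fl_n)$, with $\t_i$ acting on $\QH^*(\Fl_n)$ as multiplication by $x_i$. Consequently the evaluation $\S^q_u(\t_1,\ldots,\t_n)$, acting on $\QH^*(\Fl_n,\Z)$ via the representation $x_{ij}\mapsto t_{ij}$ established by \cite[Proposition~12.3]{FK}, coincides with the operator of quantum multiplication by $\sigma_u$. Applying this with $u=w(\la,k)$, the left-hand side of our $\mathcal{E}_n^q$-identity, evaluated at $\sigma_w$, produces $\sigma_{w(\la,k)}*\sigma_w$.

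Finally, the right-hand side acts on $\sigma_w$ by replacing each monomial $x^{D_L}=x_{i_1 j_1}\cdots x_{i_m j_m}$ with $t_{i_1 j_1}\cdots t_{i_m j_m}(\sigma_w)=t^{D_L}(\sigma_w)$, yielding exactly the claimed formula
\[
\sigma_{w(\la,k)}*\sigma_w
= \sum_{D \in \D_{k\times(n-k)}} c_D^\l \sum_{D_L \in \L(D,\l)} t^{D_L}(\sigma_w),
\]
with coefficients $c_D^\l$ as in \eqref{cdl2}. There is no real obstacle here since all the heavy combinatorial lifting was already done in proving Theorem~\ref{thm-p}; the only care needed is to check that the passage from $\mathcal{E}_n^p$ to $\mathcal{E}_n^q$ does not alter the right-hand side, which is immediate because no $p_{ij}$ appears, and to invoke the correct action of $\mathcal{E}_n^q$ on $\QH^*(\Fl_n)$ as described above.
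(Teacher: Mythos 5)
Your proposal is correct and follows essentially the same route as the paper: the paper likewise obtains the corollary directly from Theorem \ref{thm-p} (whose right-hand side involves only the $x_{ij}$, hence specializes unchanged to $\mathcal{E}_n^q$), combined with the action $x_{ij}\mapsto t_{ij}$ of \cite[Proposition 12.3]{FK} and the fact from \cite{P} that the Dunkl subalgebra of $\mathcal{E}_n^q$ realizes $\QH^*(\Fl_n)$, so that $\S^q_{w(\la,k)}(\t)$ acts as quantum multiplication by $\s_{w(\la,k)}$.
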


\section{ Nonnegativity Conjecture for $s_{\lambda}$ for other shapes $\lambda$}
\label{sec:other}

In this section we investigate the nonnegativity conjecture for Schubert polynomials of the form $s_{\lambda}(x_1,\ldots,x_k)$ for other shapes $\lambda$. Throughout this section $k$ will be fixed and we set $\t=(\t_1,\ldots,\t_k)$.

Consider first the shapes $\mu = (n-k)^r$ or $\nu= r^k$ which correspond via \eqref{d:perm_la} to Grassmannian permutations  $w(\mu,k)$ and $w(\nu,k)$. Applying Lemma \ref{lemma:rectangle_fit} and the Jacobi-Trudi identity it follows that $s_{\mu}(\t_1,\ldots,\t_k) = h_{n-k}(\t)^r$ and $s_{\nu}(\t_1,\ldots,\t_k)=e_k(\t)^r$. An obviously nonnegative expansion is an immediate consequence of the above and Theorem \ref{th:pieri}.

\begin{proposition}\label{prop:rectangles}
For any $k$, $r\leq k$ and $t\leq n-k$ let $\mu = (n-k)^r$ and $\nu= t^k$
we have the following expansions in $\E_n^+$ (in $\E_n^q$):
\begin{multline}
\S_{w(\mu,k)}(\t_1,\ldots,\t_k) = \left( \sum_{\substack{i_1\leq \cdots \leq i_k \leq k;\\ k+1\leq j_1,\ldots,j_k \neq} } x_{i_1j_1}\cdots x_{i_kj_k} \right)^r \\
\S_{w(\nu,k)}(\t_1,\ldots,\t_k) =  \left( \sum_{\substack{ k+1\leq j_1\leq \cdots \leq j_k; \\ k \geq i_1,\ldots,i_k \neq }} x_{i_1j_1}\cdots x_{i_kj_k} \right)^t,
\end{multline}
where the first sum goes over all sequences of $i$ and $j$ of length $k$, such that the $i$s are weakly increasing, $\leq k$, and the $j$s are $\geq k+1$ and all distinct; and in the second sum the $i$s are distinct and the $j$s increasing. 
\end{proposition}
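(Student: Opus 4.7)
The plan is to reduce each rectangular Schur polynomial $s_\mu(\theta)$ and $s_\nu(\theta)$ to a pure power of a single complete homogeneous or elementary symmetric polynomial in the Dunkl elements, and then apply the quantum Pieri rule (Theorem \ref{th:pieri}) to each factor. This is the strategy already suggested in the paragraph preceding the statement; the argument below is essentially bookkeeping once one observes how Lemma \ref{lemma:rectangle_fit} interacts with Jacobi--Trudi.

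First I would apply the Jacobi--Trudi identity to $\mu = (n-k)^r$:
$$s_\mu = \det\bigl[h_{\mu_i - i + j}\bigr]_{i,j=1}^{r} = \det\bigl[h_{(n-k)+(j-i)}\bigr]_{i,j=1}^{r}.$$
Every entry strictly above the diagonal has index $(n-k)+(j-i) > n-k$, and by Lemma \ref{lemma:rectangle_fit} applied to the one-row shape $(m)$, the element $h_m(\theta_1,\ldots,\theta_k)$ vanishes in $\mathcal{E}_n$ as soon as $m > n-k$ (the shape does not fit in the $k\times(n-k)$ rectangle). Consequently the matrix becomes lower triangular with $h_{n-k}(\theta)$ on the diagonal when evaluated at the Dunkl elements, so $s_\mu(\theta_1,\ldots,\theta_k) = h_{n-k}(\theta)^r$. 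The argument for $\nu = t^k$ is entirely symmetric: using the dual Jacobi--Trudi $s_\nu = \det[e_{\nu'_i - i + j}]_{i,j=1}^{t}$ with $\nu' = k^t$, Lemma \ref{lemma:rectangle_fit} kills every $e_m(\theta_1,\ldots,\theta_k)$ with $m > k$ (the column shape $(1^m)$ does not fit in the rectangle), the matrix is again triangular, and one obtains $s_\nu(\theta_1,\ldots,\theta_k) = e_k(\theta)^t$.

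Finally I would apply Theorem \ref{th:pieri} with $I = \{1,\ldots,k\}$ and $J = \{k+1,\ldots,n\}$ separately to $h_{n-k}(\theta)$ (weakly increasing first indices, distinct second indices) and to $e_k(\theta)$ (distinct first indices, weakly increasing second indices), and then raise the resulting Pieri sums to the $r$-th and $t$-th power respectively. Each Pieri sum already lies in $\mathcal{E}_n^+$ with coefficients equal to $1$, and the product of nonnegative expressions is again nonnegative; no $x_{ij}^2 = 0$ reduction is performed along the way. Invoking Lemma \ref{obv}, this same identity carries over verbatim to the $p$-quantum (and hence quantum) Schubert polynomials, which yields the claim in $\mathcal{E}_n^q$ as well. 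The main (and only) technical point is verifying that Lemma \ref{lemma:rectangle_fit} really triangularizes the Jacobi--Trudi matrix after substitution, which is immediate; beyond that, everything is formal.
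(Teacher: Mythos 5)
Your argument is correct and is exactly the paper's proof: the paper likewise invokes the Jacobi--Trudi identity together with Lemma \ref{lemma:rectangle_fit} to reduce $s_\mu(\t)$ and $s_\nu(\t)$ to $h_{n-k}(\t)^r$ and $e_k(\t)^t$, and then cites Theorem \ref{th:pieri} for the nonnegative expansion of each factor. Your write-up merely makes explicit the triangularization of the Jacobi--Trudi matrix and the appeal to Lemma \ref{obv} for the $p$-quantum version, both of which the paper leaves implicit.
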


We now focus on $s_{\lambda}$ where $\lambda$ is a hook plus a box at $(2,2)$. We  show that:
\begin{theorem}\label{tim:hook+box}
The Schubert polynomial $\S_{w(\la,k)}(\t_1,\ldots,\t_n)$, where $\la =(b,2,1^{a-1})$, has an expansion in $\mathcal{E}_n^{+}$. 
Equivalently,  $s_{(b,2,1^{a-1})}(\theta_1,\ldots,\theta_k) \in \mathcal{E}_n^+$.
\end{theorem}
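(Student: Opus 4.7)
The plan is to realize $s_{(b,2,1^{a-1})}$ as a signed combination of hook-indexed Schur polynomials via the Pieri identity
\begin{equation*}
s_{(b,1^{a})}\cdot s_{(1)}=s_{(b+1,1^{a})}+s_{(b,2,1^{a-1})}+s_{(b,1^{a+1})},
\end{equation*}
and then exploit the explicit hook expansions from Theorem~\ref{thm:hooks}. Evaluating at the Dunkl elements and rearranging yields
\begin{equation*}
s_{(b,2,1^{a-1})}(\theta)=s_{(b,1^{a})}(\theta)\cdot e_1(\theta)-s_{(b+1,1^{a})}(\theta)-s_{(b,1^{a+1})}(\theta),
\end{equation*}
where $e_1(\theta)=\sum_{i\leq k<j}x_{ij}$ by Theorem~\ref{th:pieri}. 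The key algebraic observation driving the proof is Pascal's identity
\begin{equation*}
c_D^{(b+1,1^{a})}+c_D^{(b,1^{a+1})}=\binom{\r(D)+\c(D)-a-b-2}{\c(D)-b-1}+\binom{\r(D)+\c(D)-a-b-2}{\c(D)-b}=\binom{\r(D)+\c(D)-a-b-1}{\c(D)-b}=c_D^{(b,1^{a})},
\end{equation*}
which suggests that the two subtracted hook expansions exactly absorb a distinguished piece of the product $s_{(b,1^{a})}(\theta)\cdot e_1(\theta)$.

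To execute this, I would first expand $s_{(b,1^{a})}(\theta)\cdot e_1(\theta)$ by distributing: each labeled forest $(D,L)$ of size $a+b$ contributing to $s_{(b,1^{a})}(\theta)$, multiplied by a variable $x_{ij}$ with $i\leq k<j$, produces a monomial of degree $a+b+1$ whose underlying cell set is $D\cup\{(i,j)\}$. I would then regroup by the resulting diagram $D'$. For each forest $D'$ of size $a+b+1$ that can be labeled with respect to one of $(b+1,1^{a})$, $(b,1^{a+1})$, or $(b,2,1^{a-1})$, I would compute, modulo only the commutation relations, the multiplicity with which each representative labeling $D'_L$ appears in the product, and match it against the target $c_{D'}^{(b+1,1^{a})}+c_{D'}^{(b,1^{a+1})}+c_{D'}^{(b,2,1^{a-1})}$ using Pascal's identity above. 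After the two hook subtractions, what should remain is precisely the nonnegative sum
\begin{equation*}
\sum_{D'\in\D_{k\times(n-k)}}c_{D'}^{(b,2,1^{a-1})}\sum_{D'_L\in\L(D',(b,2,1^{a-1}))}x^{D'_L}\in\E_n^+.
\end{equation*}

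The main obstacle, exactly as in the proof of Theorem~\ref{thm:hooks}, is controlling the augmented diagrams $D'=D\cup\{(i,j)\}$ that fail to be forests (the added cell closes a cycle) together with monomials in which the new $x_{ij}$ duplicates an existing generator of $x^{D_L}$. Following the blueprint of the hook case, these extraneous contributions must be packaged into families annihilated by Corollary~\ref{kill-induced}, applied to the induced subrectangle surrounding the newly created cycle or coincidence. The delicate point is that all such cancellations have to be carried out using only the commutations and braid relations of $\E_n$, never the relation $x_{ij}^2=0$; by Lemma~\ref{obv}, this stronger form of the argument will then automatically promote the expansion to $\E_n^p$, establishing the analogous nonnegativity for the $p$-quantum (and in particular quantum) Schubert polynomials $\S_{w(\la,k)}^p$ and $\S_{w(\la,k)}^q$.
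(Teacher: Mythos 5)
Your route is the paper's route: the same Pieri identity expressing the hook-plus-box Schur function as $s_{\mathrm{hook}}\cdot h_1$ minus two larger hooks, the same input from Theorem~\ref{thm:hooks}, and the very same Pascal identity merging the two subtracted hooks into a single binomial coefficient (this is exactly the paper's computation leading to~\eqref{rhs}). However, your proposal stops precisely where the actual work begins. The content of the proof is the step you summarize as ``compute the multiplicity \dots and match it against the target'': one must show that for \emph{every} commutation class $\sim x_{IJ}x_{ij}$ occurring in the subtracted hooks, the product $s_{(b,1^{a})}(\theta)\,h_1(\theta)$ contains at least as many copies of that class. The paper does this via a four-case analysis according to whether $i$ already occurs among the row indices and $j$ among the column indices, in each case bounding from below the number of variables of $x_{IJ}$ that can be commuted to the last position (so that $x_{ij}$ can be inserted in that many inequivalent ways), and checking that this count times the relevant hook coefficient dominates the subtracted binomial coefficient; the boundary cases ($A\leq 2$, $B\leq 1$ in the paper's notation) require separate care. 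None of this is carried out in your sketch, and it is not automatic.

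Two further points. First, your concluding display presupposes coefficients $c_{D'}^{(b,2,1^{a-1})}$ and labeling classes $\L(D',(b,2,1^{a-1}))$, which are defined only for hooks; the theorem asserts only the \emph{existence} of a nonnegative expansion, and the paper's proof produces no closed-form coefficients for this shape (an explicit formula is obtained only for $\lambda=(2,2)$, by a separate Jacobi--Trudi argument). Second, Corollary~\ref{kill-induced} is not needed here: any monomial of the product whose commutation class is not a labeling of a forest with respect to one of the two subtracted hooks is simply never subtracted, so it survives with nonnegative sign and there is nothing to cancel. The genuinely delicate point you do identify correctly is that the whole comparison must avoid the relation $x_{ij}^2=0$ so that Lemma~\ref{obv} lifts the conclusion to $\E_n^p$ and $\E_n^q$.
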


\begin{proof}
To prove that $s_{(b,2,1^{a-1})}(\theta_1,\ldots,\theta_k) \in \mathcal{E}_n^+$ we use the Pieri rule:
\begin{align}\label{pieri_hookplus}
 s_{(b,2,1^{a-2})}= s_{(b,1^{a-1})}h_1 - s_{(b,1^a)}- s_{(b+1,1^{a-1})} .
\end{align}
Recall that  $h_1(\t)=s_{(1)}(\t)=\sum_{i\leq k, k<j} x_{ij}$.
The expansion for hooks in Theorem \ref{thm:hooks} gives us the following formulas for the three hooks in equation \eqref{pieri_hookplus}:
\begin{align}\label{3hooks}
s_{(b,1^{a-1})}(\t)&=\sum_{D \in \D_{k\times (n-k)}} \sum_{D_L \in \L(D,(b,1^{a-1}))} c_D^{(b,1^{a-1}) }x^{D_L}\\ \label{3hooks1}
s_{(b,1^a)} (\t)&=\sum_{D \in \D_{k\times (n-k)}} \sum_{D_L \in \L(D,(b,1^{a}))} c_D^{(b,1^{a}) }x^{D_L} \\ \label{3hooks2}
s_{(b+1,1^{a-1})}(\t)&=\sum_{D \in \D_{k\times (n-k)}} \sum_{D_L \in \L(D,(b+1,1^{a-1}))} c_D^{(b+1,1^{a-1})} x^{D_L}. 
\end{align}
We will consider the sequences of indices appearing in each monomial $x^{D_L}$ and 
 for $I=(i_1, \ldots, i_l) \in [1\ldots k]^l$, $J=(j_1, \ldots, j_l)\in [k+1 \ldots n]^l $ we define  $x_{IJ} = x_{i_1j_1}\cdots x_{i_l j_l}$. For each of the terms on the right hand side of  \eqref{3hooks}-\eqref{3hooks2} by Lemma \ref{lemma:classes} we can choose  sequences of indices $I$ and $J$ such that $x^{D_L}=x_{IJ}$ and $I=(I_1,I_2)$, $J=(J_1,J_2)$, where $I_1$ and $J_1$ are sequences of length $a$, the elements in $I_1$ and $J_2$ are distinct and the elements in $J_1$ and $I_2$ are weakly increasing. Notice also that the number of distinct rows in $D$ is the same as the number of distinct elements in $(I_1,I_2)$ and  the number of columns is the cardinality of $J$ as a set. 
 
It will be more convenient to express the coefficients $c_D^{\lambda}$ appearing in \eqref{3hooks}-\eqref{3hooks2}  in terms of the sequences of indices just considered. Here $|S|$ will denote the number of distinct elements of $S$.  
The coefficients in front of $x^{D_L} =x_{IJ}$ are given by
\begin{align}\label{coefficients_hookplusbox}
c_D^{(b,1^{a-1})} &= \binom{|I_1\cup I_2| + |J_1\cup J_2| - a-b}{|I_1\cup I_2|-a},\\
c_D^{(b,1^{a})} &= \binom{|I_1\cup I_2| + |J_1\cup J_2| - a-b-1}{|I_1\cup I_2|-(a+1)}, \\
c_D^{(b+1,1^{a-1})} &= \binom{|I_1\cup I_2| + |J_1\cup J_2| - a-b-1}{|I_1\cup I_2|-a} .
\end{align}

Notice that in  the expressions of the two hooks of size $a+b$, the lengths of the index sequences $I_1$ and $I_2$ are the same ($a$ and $b$, correspondingly), so we can combine the expressions as
\begin{multline}\label{rhs}
s_{(b,1^a)}(\t)+s_{(b+1,1^{a-1})}(\t) = \sum_{D\in \D_{k\times (n-k)}  }\sum_{\substack{ D_L\in \L(D,(b,1^a)),\\  x^{D_L}\sim_D x_{I_1J_1}x_{I_2J_2}}}\\
\left( \binom{|I_1\cup I_2| + |J_1\cup J_2| - a-b-1}{|I_1\cup I_2|-a} + \binom{|I_1\cup I_2| + |J_1\cup J_2| - a-b-1}{|I_1\cup I_2|-(a+1)} \right) x^{D_L}\\
= \sum_{D\in \D_{k\times (n-k)} } \sum_{\substack{D_L\in \L(D,(b,1^a)),\\  x^{D_L}\sim_D x_{I_1J_1}x_{I_2J_2}}} 
\binom{|I_1\cup I_2| + |J_1\cup J_2| - a-b}{|I_1\cup I_2|-a} x^{D_L},
\end{multline}
where the sum  goes over all diagrams (which are forests) in the $k\times n-k$ rectangle and $D_L$ goes over all labeling classes in $\L(D,(b,1^a))$ and $I_1,J_1,I_2,J_2$ are sequences of indices, such that $x_{I_1J_1}x_{I_2J_2}$ is a representative of its class (see Lemma \ref{posets:classes}), and $I_1,J_1$ have $a$ elements and $I_2,j_2$ have $b$ elements. Since all diagrams considered in this proof are in $\D_{k \times (n-k)}$ summation over $D$ or $D'$ will mean summation over all diagrams in $\D_{k \times (n-k)}$.

We can write a similar expression for $s_{(b,1^{a-1})}(\t)$ with labelings $x^{D_L} \sim_D x_{I_1J_1}x_{I_2J_2}$ such that $I_1$ and $J_1$ have lengths $a$
\begin{multline}\label{lhs}
s_{(b,1^{a-1})}(\t)h_1(\t) = \sum_{i=1}^{k}\sum_{j=k+1}^{n} \sum_{D'} \\ \sum_{\substack{ L' \in \L(D',(b,1^{a-1})), \\ x^{L'} \sim_D x_{I_1J_1}x_{I'_2J'_2}}}
\binom{|I_1\cup I'_2| + |J_1\cup J'_2| - a-b}{|I_1\cup I'_2|-a} x_{I_1J_1}x_{I'_2J'_2}x_{ij},
\end{multline}
where the sum goes over all diagrams $D'$ and labeling classes $L'$ in $\L(D',(b,1^{a-1}))$, such that $x_{I_1J_1}x_{I'_2J'_2}$ is a class representative and the length of the sequences $I_1$ and $J_1$ is $ a$ and the length of  $I'_2$ and $J'_2$ is $b-1$.

For each monomial in \eqref{rhs} we will compare the coefficients with the corresponding coefficients in \eqref{lhs} and show that the ones in \eqref{rhs} are always smaller. 
Consider a monomial (in the $x$--variables) in \eqref{lhs} and consider its last variable $x_{ij}$, so the monomial can be written as
$x_{I_1J_1}x_{I_2J_2}=x_{I_1J_1}x_{I'_2J'_2}x_{ij}$, where $I_2 = (I'_2,i)$ and $J_2=(J_2',j)$. Clearly this term appears exactly like this in \eqref{lhs}.
Consider the difference $s_{(b,1^{a-1})}(\t)h_1(\t) - s_{(b,1^a)}(\t)-s_{(b+1,1^{a-1})}(\t)$.
The coefficient in front of $x_{IJ}x_{ij}$ (without involving any commutativity relations in $s_{(b,1^{a-1})}(\t)h_1(\t)$)  for $I=(i_1,I_2')$ and $J=(J_1,J_2')$ is
\begin{multline}\label{diff_coef}
\binom{ |I_1\cup I_2'| + |J_1\cup J_2'| -a -b}{|I_1\cup I_2'|-a} - \binom{ |I_1\cup I_2' \cup \{i\} | + |J_1\cup J_2' \cup \{j\}| -a-b}{|I_1\cup I_2' \cup \{i\} |-a}.
\end{multline}

Let  $A = |I_1 \cup I_2'|-a$ and $B=|J_1\cup J_2'|-b$.

There are $4$ different cases depending on whether $i \in I_1\cup I_2'$ and $j\in J_1\cup J_2'$, which we consider separately. In all these cases we  show that the total coefficient of  terms $\sim x_{IJ}x_{ij}$ is greater in \eqref{lhs} than in \eqref{rhs}, where $\sim$ means equivalence under commutation.

\textit{First case:} If $i \in I_1 \cup I_2'$ and $j \in J_1\cup J_2'$ then the coefficient in \eqref{diff_coef} is 0, so the total coefficient in front of $x_{IJ}x_{ij}$ is nonnegative.

For the other 3 cases we need to consider in how many  ways a monomial $x^{L'}x_{ij}$ appears in $s_{(b,2,1^{a-2})}(\t)h_1(\t)$ by applying the commutation relation to $x_{ij}$ and the remaining variables in $x_{IJ}$.

The $x$'s which could be moved to the end of $x_{IJ}$ by commutation are:
1) The ones in $x_{I'_2J'_2}$ which are last in a sequence of equal $i$s, so their index set is $(I_b,J_b)$, where $I_b$ is the set of all distinct elements in $I'_2$.
2) The ones in $x_{I_1J_1}$ which are last in a sequence of equal $j$s, $(I_a,J_a)$, such that $J_a$ is the set of distinct elements of $J_1$. Moreover, we can pick only these $x$'s, whose indices are not in $I_2'\cup J_2'$.

Once such an $x_{i_r,j_r}$ has been moved to the end, we can move $x_{ij}$ by commutation within $x_{I_2',J_2'}$ (without $x_{i_r,j_r}$) if $ i\neq i_r, j\neq j_r$, which gives a representative labeling class as in Lemma \ref{lemma:classes} (depending where we took $x_{i_rj_r}$ from): since $x_{I_1J_1}x_{I'_2J'_2}x_{ij}$ was a representative labeling for the hooks from \eqref{rhs}, we have that $j \not \in J_2'$ and thus $J_2' \cup \{j\}$ still has all $j$s distinct.  

Thus the number of  $x$'s we can move to the end (and insert $x_{ij}$) is:
\begin{multline}\label{comm_terms}
|I_2'\setminus \{i\}| + |(I_a,J_a)\setminus (I_2',J_2') \setminus \{i,j\}| \geq \max( |I_2'\setminus \{i\}|, |J_1 \setminus \{j\} \setminus J_2'|-1),
\end{multline}
where $(I_a,J_a)\setminus (I_2',J_2) = \{(i',j') \in (I_a,J_a), i' \not \in I_2', j' \not \in J_2'\}$ and so 
$ |(I_a,J_a)\setminus (I_2',J_2') \setminus \{i,j\}| \geq |(I_a,J_a) \setminus \{i,j\}| - |I_a \cap I_2'| - |J_a \cap J_2'|= |J_1\setminus \{j\}| - | I_a \cap (I_2' \cup \{i\})| -|J_1 \cap J_2'| $.

\textit{Second case:} 
If $i \in I_1 \cup I_2'$ and $j \not \in J_1 \cup J_2'$, then the difference \eqref{diff_coef} is
$$-\binom{A+B}{A-1},$$
assuming that $A \geq 1$, since otherwise we get 0 and there is nothing more to prove.

For each of the variables $x_{i'j'}$ that we take from $x_{IJ}$ and move to the end through commutation and insert $x_{ij}$ we get a commutation equivalent monomial $x_{I'J'}x_{i'j'}$ such that $x_{I'J'}$ is a valid labeling class. The coefficient $c$ of $x_{I'J'}x_{i'j'}$ in \eqref{lhs}, i.e., the coefficient of $x_{I'J'}$ in the expansion of $s_{(b,1^{a-1})}(\t)$, is at least
$$\binom{ |I_1\cup I_2'|-1 +|J_1\cup J'_2| -(a+b)}{|I_1\cup I_2'|-1-a}=
\binom{A+B}{A-1} \frac{B+1}{A+B}. $$
The number of variables $x_{i'j'}$ we can move to the end is given by \eqref{comm_terms} and is at least $|I_2'|-1\geq A-1$ and not less than 1, so the total coefficient at the commutation class $\sim x_{IJ}x_{ij}$  is at least
\begin{align*}
\noshow{\binom{|I_1\cup I_2'| + |J_1\cup J_2'| -a -b}{|I_1\cup I_2'|-a-1} 
\frac{\max( |I_2'\setminus \{i\}|, |J_1\setminus J_2'|-1)(|J_1\cup J_2'| -b+1)}{|I_1\cup I_2'| + |J_1\cup J_2'| -a -b}}
\binom{A+B}{A-1}\frac{\max(A-1,1)(B+1)}{A+B}\geq \binom{A+B}{A-1},
\end{align*}
\noshow{We have that $|J_1 \cup J_2'| - (b-1) = |J_1 \setminus J_2'|$ since $|J_2'|=b-1$ and thus the ratio is 
$$ \frac{\max( |I_2'\setminus \{i\}|, |J_1\setminus J_2'|-1)(|J_1\setminus J_2'| )}{|I_1\cup I_2'| + |J_1\setminus J_2'| -a -1}
\geq \frac{(|I_1\cup I_2'|-a-1)(|J_1\setminus J_2'|)}{|I_1\cup I_2'| + |J_1\setminus J_2'| -a -1} \geq 1,$$
if $|I_1\cup I_2'|-a-1 \geq 1$, $|J_1\setminus J_2' \geq 1$ and not both equal to 1, in which case the total coefficient is nonnegative.}
since $B\geq 0$ and $A \geq 1$. So the total coefficient of $x_{IJ}x_{ij}$ (under commutation)  is nonnegative in this case as well.
\noshow{We will now consider the extreme cases.

If $|J_1\cup J_2'|=b-1$, then on the one side we have the negative contribution of
$\binom{|I_1\cup I_2'| + |J_1\cup J_2'| -a -b}{|I_1\cup I_2'|-a-1} = 1$ and on the other side the binomial coefficient is 0, but in fact the actual coefficient is at least 1, since we can find at least one term to commute out with $x_{ij}$ unless $I_2'=\{i\}$. But then we go back to the original equation for the coefficient, \eqref{diff_coef}, and we see that it is $\geq 1 -1 =0$.

If $|I_1\cup I_2'|-a-1 = -1$, we again go back to \eqref{diff_coef} to get that the coefficient is at least 0. If $|I_1\cup I_2'|-a-1 =0$ (and, presumably $|J_1\cup J_2'|> b-1$, so that $J_1 \not \subset J_2'$, then we can again find an $x$ to commute out: either $I_2' \neq \{i\}$, so there is one $x$ to get out of there, or else $I_2'=\{i\}$ and $i \not \in I_1$, so we can commute to the back an element with $x_{i_rj_r}$ with $j_r \in J_1 \setminus J_2'$.

If $|J_1\cup J_2'|=b$, then the coefficient is $-(|I_1\cup I_2'| - a)$ and the elements we can commute out are at least $|I_2' \setminus \{i\}| \geq  |I_1\cup I_2'| - a$ unless $I_2'\cap I_1 = \emptyset$ and $i \in I_2'$. But then we must be able to also commute one of the elements in $I_1,J_1\setminus J_2'\neq \emptyset$.
}

\textit{Third case:}
Let  $i \not \in I_1\cup I_2'$, but $j \in J_1\cup J_2'$. 
The coefficient in front of $x_{IJ}x_{ij}$ (without involving any commutation) is given in \eqref{diff_coef} as
\begin{multline*}\binom{ |I_1\cup I_2'| + |J_1\cup J_2'| -a -b}{|I_1\cup I_2'|-a} - \binom{ |I_1\cup I_2' \cup \{i\} | + |J_1\cup J_2' \cup \{j\}| -a-b}{|I_1\cup I_2' \cup \{i\} |-a} \\=  -\binom{ A+B}{ A+1}.
\end{multline*}
Consider the elements in $(I_a,J_a)$ and $(I_b,J_b)$ which we can  move to the end by commutation. As in the second case, for each  variable we move to the end (and insert $x_{ij}$) we get a coefficient coming from the expansion of $s_{(b,1^{a-1})}(\t)$ of at least
$$\binom{ A+B-1}{A+1} = \binom{A+B}{A+1}\frac{A+1}{A+B}.$$
The number of such variables we can move is at least, by \eqref{comm_terms}, $ \max( A, B-1)$. So the total coefficient is at least
$$\binom{A+B}{A+1}\frac{(A+1)\max(B-1,A)}{A+B} \geq \binom{A+B}{A+1}$$
and the coefficient of $x_{IJ}x_{ij}$ is again nonnegative. 

\noshow{Again we consider the elements in $(I_a,J_a)$ and $(I_b,J_b)$ which we can commute out. Again, for each such term we take out (and insert $x_{ij}$) we get a coefficient of at least
\begin{multline*}\binom{ |I_1\cup I_2'| + |J_1\cup J_2'|-1 -a -b}{|I_1\cup I_2'|-a} \\
= \binom{ |I_1\cup I_2'| + |J_1\cup J_2'| -a -b}{|I_1\cup I_2'|-a+1}\frac{|I_1 \cup I_2'|-a+1}{|I_1\cup I_2'| + |J_1\cup J_2'| -a -b}.\end{multline*}
and the number of such terms is at least $ \max( |I_2'|-1, |J_1\setminus J_2'|-1$. So the total coefficient is at least
$$\binom{ |I_1\cup I_2'| + |J_1\cup J_2'| -a -b}{(|I_1\cup I_2'|-a+1)}\frac{((|J_1\cup J_2'|-b)(|I_1 \cup I_2'|-a+1)}{|I_1\cup I_2'| + |J_1\cup J_2'| -a -b}$$
The factor is at least 1 unless $|J_1\cup J_2'|-b \leq 0$. But if $|J_1\cup J_2'|\leq b$, the binomial coefficient above is 0 and there is nothing left to prove. }

\textit{Fourth case:} Finally, let $i \not \in I_1\cup I_2'$ and $j \not \in J_1 \cup J_2'$. 
Then if we move any $x$ to the end by commutation and insert $x_{ij}$, we are not decreasing the number of rows or columns in $D$.  In \eqref{diff_coef} we have
$$\binom{|I_1\cup I_2'|+|J_1\cup J_2'|-a-b}{|I_1\cup I_2'|-a} - 
\binom{|I_1\cup I_2'|+|J_1\cup J_2'|-a-b+2}{|I_1\cup I_2'|-a+1}.$$
The number of terms that can be moved to the end by commutation is at least 
$\max(|I_2'|, |J_1 \cup J_2'|-(b-1))\geq \max(A, B+1)$. The coefficient of $x_{IJ}x_{ij}$ (under commutation) is at least 
\begin{multline*}
(\max(A,B+1)+1)\binom{A+B}{A} -\binom{A+B+2}{A+1}=\\
\frac{(A+B)!}{A!B!}(\max(A,B+1)+1 - \frac{(A+B+1)(A+B+2)}{(A+1)(B+1)})\geq 0,
\end{multline*} 
whenever $A\geq 0, B>1$. This expression is less than $0$ only if $B=1$ and $A\leq 2$ or $B=0$. But in each of these cases a more careful analysis of what elements can be moved out shows again that the coefficient of $x_{IJ}x_{ij}$ (under commutation) is nonnegative and this completes the proof.  
\noshow{Let $|J_1\cup J_2'|=b+1$. Since we can commute out at least $|I_2'|$ terms we have a total coefficient of at least $(|I_2'|+1)\binom{|I_1\cup I_2'|-a +|J_1\cup J_2'|-b}{|I_1 \cup I_2'|-a}=(|I_2'|+1)(|I_1\cup I_2'|-a+1)$. On the RHS we have
$\binom{|I_1\cup I_2'|-a +|J_1\cup J_2'|-b+2}{|I_1 \cup I_2'|-a+1}=\frac{(|I_1 \cup I_2'|-a+2)(|I_1 \cup I_2'|-a+3)}{2}$. The LHS is at least as large as the RHS, unless $|I_2'|=1$, but then we can commute out elements from the first part....
If $|J_1\cup J_2'|=b$ then the RHS is $|I_1 \cup I_2'|-a+2$, while the LHS is at least $(|I_2'|+1)$ and this is smaller only when $|I_2'|\leq |I_1\cup I_2'|-|I_1|=|I_2'\setminus I_1|$, so we must have $I_2' \cap I_1=\emptyset$, but then we can commute at least one more element from the first part (the one that is not in $J_2'$), so we are nonnegative again.}
\end{proof}

We can now use Lemma \ref{obv} and apply it to the steps of the proof of Theorem \ref{tim:hook+box}, to see that it is also true in the $p$-quantum world:

\begin{theorem}
The quantum and $p$-quantum Schubert polynomials  $\S_{w_b}^q$ and $\S_{w_b}^p$, where $w_b=w((b,2,1^{a-1}),k)$,  have  expansions in $\mathcal{E}_n^{+}$.
\end{theorem}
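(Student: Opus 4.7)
The plan is to apply Lemma \ref{obv} to the same rearrangement of Pieri's rule used in the proof of Theorem \ref{tim:hook+box}, namely the identity
\begin{equation*}
\S_{w_b} \;=\; s_{(b,2,1^{a-1})} \;=\; s_{(b,1^{a-1})}\cdot h_1 \;-\; s_{(b,1^a)} \;-\; s_{(b+1,1^{a-1})},
\end{equation*}
which expresses $\S_{w_b}$ as a polynomial $F$ in the Schubert polynomials $f_1 = s_{(b,1^{a-1})}$, $f_2 = h_1 = s_{(1)}$, $f_3 = s_{(b,1^a)}$, $f_4 = s_{(b+1,1^{a-1})}$, each of which is a hook Schur polynomial (hence the Schubert polynomial of a Grassmannian permutation). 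Each $f_i$ thus meets the hypothesis of Lemma \ref{obv}: by Theorem \ref{thm-p}, the evaluations $f_i(\theta)$ and $f_i^p(\theta)$ have identical expansions lying in $\E_n^+$.

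The second step is to verify the remaining clause of Lemma \ref{obv}, namely that the nonnegative expansion obtained for $\S_{w_b}(\theta)$ in the proof of Theorem \ref{tim:hook+box} is produced \emph{without ever invoking the relation $x_{ij}^2 = 0$}. Inspecting that proof, the entire argument is a case-by-case comparison of coefficients of fixed monomials on the two sides of $s_{(b,1^{a-1})}h_1 - s_{(b,1^a)} - s_{(b+1,1^{a-1})}$, grouped into commutation classes under the relations (5) of $\E_n$; the only ``rearrangement'' performed within monomials is the commutation $x_{ij}x_{kl} = x_{kl}x_{ij}$ for $\{i,j\}\cap\{k,l\}=\emptyset$, which is already a relation in $\E_n^p$. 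The four-case argument (according to whether $i \in I_1\cup I_2'$ and $j \in J_1 \cup J_2'$) compares combinatorial counts of representatives in labeling classes, and the binomial identities used are purely numerical. Since the braid-type relations (2)--(4) and the squaring relation $x_{ij}^2=0$ are never applied to produce cancellations, the identity of expansions holds already in $\E_n^p$, not just in the quotient $\E_n$.

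The main obstacle is thus the verification in the preceding paragraph: I need to go through each of the four cases of the Theorem \ref{tim:hook+box} proof and confirm that every ``commutation equivalence'' $D_L \sim_D D_{L'}$ used is genuinely an equality in $\E_n^p$ (it is, since $\sim_D$ was defined using only the disjoint-index commutation), and that the final step, where coefficients are shown to be nonnegative integers by pointwise bounds, does not surreptitiously merge two monomials via $x_{ij}^2 = 0$ (it does not, because the labeling classes $\L(D,\lambda)$ are indexed by forests $D$, and no monomial contains a repeated variable $x_{ij}$). Once this is confirmed, Lemma \ref{obv} yields the identical expansion
\begin{equation*}
\S_{w_b}^p(\theta) \;=\; \S_{w_b}(\theta) \;\in\; \E_n^+
\end{equation*}
inside $\E_n^p$, proving the $p$-quantum statement.

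Finally, the quantum statement for $\S_{w_b}^q$ follows by the canonical specialization $p_{i,i+1} = q_i$ and $p_{ij} = 0$ for $|i-j|\geq 2$ of $\E_n^p$ onto $\E_n^q$ recalled after equation (\ref{schub_e_q}): this specialization sends $\S_{w_b}^p$ to $\S_{w_b}^q$ and preserves the cone of nonnegative monomials in the generators $x_{ij}$, so the same expression witnesses $\S_{w_b}^q(\theta)\in\E_n^+$ (viewed inside $\E_n^q$).
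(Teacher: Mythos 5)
Your proposal is correct and follows essentially the same route as the paper, whose proof of this theorem consists precisely of invoking Lemma \ref{obv} on the steps of the proof of Theorem \ref{tim:hook+box} (the Pieri decomposition of $s_{(b,2,1^{a-1})}$ into hook Schur polynomials, whose $\E_n$- and $\E_n^p$-expansions coincide by Theorem \ref{thm-p}) and then specializing $p_{i\,i+1}=q_i$, $p_{ij}=0$ for $|i-j|\geq 2$. Your extra verification that the four-case coefficient comparison uses only the disjoint-index commutation relation, and never $x_{ij}^2=0$, is exactly the hypothesis of Lemma \ref{obv} that the paper leaves implicit.
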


\medskip

While an explicit expansion for any general shape other than the hook remains elusive so far, we can derive such an expansion for the simplest case of a hook plus a box, namely, for $\lambda=(2,2)$ corresponding to $\S_w$, where $w_{k-1}=k+1,w_{k}=k+2,w_{k+1}=k-1,w_{k+2}=k$ and $w_i=i$ otherwise.

\begin{theorem}\label{thm:twobytwo} The Schubert polynomial $\S_w$ for $w=w((2,2),k)$ and its quantum version $\S_w^q$ have the following expansion in $\E_n^+$:
$$\S_w(\t_1,\ldots,\t_k)=s_{(2,2)}(\t_1,\ldots,\t_k) = \sum_{L: x_L \sim x_{IJ} } c_{IJ} x_{IJ},$$
where the sum runs over all classes $x_L \sim x_{IJ}$ distinct under commutation of the variables in $x_{IJ}$ and the coefficients are given by:
$$c_{IJ}=\begin{cases} 2, & \text{ if }|I|=|J|=4, \\ 0, & \text{ if } I \text{ or }J\text{ have an index of multiplicity 3 or 4},\\ 0, & \text{ if } x_{IJ}\sim x_{aj_1}x_{bj_1}x_{bj_2}x_{cj_2}, \text{ or }x_{IJ} \sim x_{i_1a}x_{i_1b}x_{i_2b}x_{i_3c}, \\
1, & \text{ otherwise. }
\end{cases}$$
Thus in the quantum cohomology ring $\QH^*(\Fl_n,\mathbb{Z})$ we have
$$\sigma_{w}*\sigma_{\pi} = \sum_{(I,J)} c_{IJ} t_{IJ}(\sigma_{\pi}).$$
\end{theorem}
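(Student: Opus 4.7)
The plan is to mimic the approach of Theorem \ref{tim:hook+box}, starting from the Pieri identity
$$s_{(2,1)} \cdot h_1 = s_{(2,2)} + s_{(2,1,1)} + s_{(3,1)},$$
so that $s_{(2,2)}(\theta) = s_{(2,1)}(\theta)\, h_1(\theta) - s_{(2,1,1)}(\theta) - s_{(3,1)}(\theta)$. I would substitute the closed expansions from Theorem \ref{thm:hooks} for the three hook Schur polynomials on the right, and $h_1(\theta) = \sum_{i \leq k < j} x_{ij}$ from Theorem \ref{th:pieri} for the remaining factor. Every monomial that arises is a product of exactly four generators, so the computation can be organized by the commutation-equivalence classes $[x_{IJ}]$ of four-variable words, indexed by the underlying diagram $D \in \mathcal{D}_{k\times(n-k)}$ of row and column index multisets.

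The first step is to dispose of the vanishing classes. Whenever some row or column index appears with multiplicity $3$ or $4$, the induced sub-rectangle is too narrow to accommodate the relevant hook, and Corollary \ref{kill-induced} forces the contributions from $s_{(2,1)}(\theta) h_1(\theta)$ and from each subtracted hook to cancel, giving coefficient $0$. Similarly, whenever $D$ is not a forest, Theorem \ref{thm:hooks} assigns coefficient $0$ in each summand of the right-hand side, so the corresponding class does not appear.

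The second and main step is a case analysis on the remaining forest diagrams, classified by the pair $(|I|, |J|)$. For each surviving class $[x_{IJ}]$ I would express the coefficients from Theorem \ref{thm:hooks} as binomials in $r(D) = |I|$ and $c(D) = |J|$, and compare with the count of representatives of $[x_{IJ}]$ produced by $s_{(2,1)}(\theta) h_1(\theta)$ after applying the same commutation moves used in the proof of Theorem \ref{tim:hook+box} (appending an arbitrary $x_{ij}$ at the end of a three-letter hook word). The generic case $|I| = |J| = 4$ yields coefficient $2$ after combining the three binomial contributions. The mixed cases $(|I|, |J|) \in \{(3,4),(4,3)\}$, where exactly two boxes share a row or a column, typically give coefficient $1$; the two exceptions are precisely the path-shaped configurations $x_{a j_1} x_{b j_1} x_{b j_2} x_{c j_2}$ and $x_{i_1 a} x_{i_1 b} x_{i_2 b} x_{i_3 c}$, where the hook expansions conspire to cancel the product contribution exactly. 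The remaining forest diagrams with $|I|, |J| \leq 3$ fall into the last case and produce coefficient $1$.

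The main obstacle will be the boundary analysis at these two path configurations: it is precisely here that the positive contributions from $s_{(2,1)}(\theta) h_1(\theta)$ exactly match the subtracted hook coefficients rather than exceeding them, and verifying the cancellation requires carefully enumerating how many commutation representatives of each path monomial arise in each of the three hook expansions on that specific diagram. Once these identities are checked, summing over all classes yields the stated formula. Finally, the quantum and $p$-quantum extensions follow from Lemma \ref{obv}: the derivation invokes only the symmetric-function Pieri identity and the expansions of $h_1$ and of hook Schur polynomials (all of which admit $p$-quantum analogues by Theorems \ref{thm-p} and \ref{th:pieri}), and never uses the relation $x_{ij}^2 = 0$. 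Applying the resulting expansion to $\sigma_\pi$ through the Bruhat operators from \eqref{eq:tij} then yields the quantum-cohomology multiplication formula.
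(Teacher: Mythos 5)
Your route is genuinely different from the paper's: the paper does not run the hook-times-$h_1$ Pieri recursion of Theorem \ref{tim:hook+box} here, but instead starts from the Jacobi--Trudi identities $s_{(2,2)}=h_2h_2-h_3h_1=e_2e_2-e_3e_1$ and computes the coefficient of each commutation class directly, using only the Pieri expansions of $h_m$ and $e_m$ from Theorem \ref{th:pieri} together with a restriction trick (restricting to the monomials whose first or second indices lie in a fixed set, and using that $e_2|_{\mathcal I}=e_3|_{\mathcal I}=0$ when $|\mathcal I|=1$, etc.). That said, your proposal as written has a concrete error in the case analysis that would produce the wrong coefficients. The two exceptional zero classes $x_{aj_1}x_{bj_1}x_{bj_2}x_{cj_2}$ and $x_{i_1a}x_{i_1b}x_{i_2b}x_{i_3c}$ do \emph{not} live in the mixed cases $(|I|,|J|)\in\{(3,4),(4,3)\}$ --- they have $(|I|,|J|)=(3,2)$ and $(3,3)$ respectively --- and in fact every class with $(|I|,|J|)=(3,4)$ or $(4,3)$ has coefficient $1$. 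Conversely, your claim that ``the remaining forest diagrams with $|I|,|J|\le 3$ fall into the last case and produce coefficient $1$'' is false: that is exactly where all the nontrivial cancellations to $0$ occur (including, e.g., the class $x_{ia}x_{ib}x_{cj}x_{dj}$ and the class $\sim x_{ia}x_{ij}x_{bj}x_{cd}$ in the paper's $|I|=|J|=3$ analysis). So the plan, executed as described, does not reproduce the stated formula.

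There is a second, structural gap: you propose to read off exact coefficients by ``applying the same commutation moves used in the proof of Theorem \ref{tim:hook+box}'', but that proof only establishes \emph{inequalities} (lower bounds on the number of commutation representatives available in $s_{(b,1^{a-1})}(\t)h_1(\t)$, via the estimate \eqref{comm_terms}), which suffices for nonnegativity but not for pinning down $c_{IJ}$ exactly. To extract the precise values $2,1,0$ you would have to count the commutation representatives in each of the three hook expansions exactly and verify that no class is over- or under-counted, which is precisely the delicate bookkeeping your sketch defers. The paper's Jacobi--Trudi route avoids this by reducing everything to products of $h$'s (and $e$'s), where the number of ways to split a fixed four-letter word between the two factors can be counted cleanly case by case. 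Your reduction of the multiplicity-$\ge 3$ and non-forest classes to Corollary \ref{kill-induced} and Theorem \ref{thm:hooks} is plausible, and the final appeal to Lemma \ref{obv} for the quantum statement matches the paper; but the heart of the theorem --- the exact values of $c_{IJ}$ and the identification of exactly which classes vanish --- is not correctly established by the proposal.
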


\begin{proof}

We employ the notation from the previous proof, where for sequences of indices $I=(i_1,\ldots)$ and $J=(j_1,\ldots)$, we set $x_{IJ}=x_{i_1j_1}x_{i_2j_2}\cdots$.
Here we determine the coefficient of $x_{IJ}$, where $x_{IJ}$s are considered up to commutation. In other words, if $x_{I'J'}$ can be obtained from $x_{IJ}$ only by using the commutation relation, then these terms are considered equivalent. 
Let $c_{IJ}$ be the coefficient of $x_{IJ}$ in the expansion of $s_{(2,2)}$. We will denote by $[x]f$ the coefficient of $x$ in $f$ and $f|_{I}$ the restriction of $f$ to its summands whose first indices are in $I$.

The Jacobi-Trudi identity gives the following expressions $$s_{(2,2)}=h_2h_2 - h_3h_1=e_2e_2-e_3e_1.$$ 

Monomials with first indices $i$ coming from a given fixed set $\mathcal{I}$ can be obtained by restriction of the evaluation to the corresponding sets of indices.  Every function we consider here is expressed through the elementary and homogenous symmetric functions whose expansions can be restricted to any sets of first or second indices. Thus when $\# \mathcal{I} =1$  we have $e_2(\t)|_{\mathcal{I}}=0$ and $e_3(\t)|_{\mathcal{I}}=0$, so $s_{(2,2)}(\t)|_{\mathcal{I}}=0$ and the coefficient $c_{IJ}=0$ in this case ($|I|=1$). 

By the same reasoning all monomials with index set $I$ having only 2 elements come from the corresponding restriction  and the expansion in terms of the $e$'s, so $e_3(\t)|_I=0$ and $s_{(2,2)}(\t)|_{I}=(e_2(\t)e_2(\t))|_{I}$. The monomials whose first index has 2 elements are thus the following 
$$\sum_{\substack{ i_1\neq i_2,\\  j_1\leq j_2\; ; \; j_3 \leq j_4}} x_{i_1j_1}x_{i_2j_2}x_{i_1j_3}x_{i_2j_4} +\sum_{\substack{ i_1\neq i_2,\\ j_1\leq  j_2\; ; \; j_3 < j_4}} x_{i_1j_1}x_{i_2j_2}x_{i_2j_3}x_{i_1j_4}.$$
So we must have that the multiplicity of each index in $I$ is 2  and if $x_{IJ} \sim x_{i_1j_1}x_{i_2j_2}x_{i_1j_3}x_{i_2j_4}$ under commutation for any sequence $j_1,\ldots,j_4$, then $c_{IJ}=1$.  The alternative case is exactly when  $x_{IJ} \sim x_{i_1j_1}x_{i_1j_2}x_{i_2j_2}x_{i_2j_3}$ and $j_1,j_2,j_3$ are not necessarily distinct, then $c_{IJ}=0$.

Consider now the monomials  which have at least 3 distinct indices in $I$. If there are only 2 distinct indices in $J$ then we get the mirror sum of the above expression with the condition that the set of first indices has at least 3 distinct elements (to avoid double counting with the case $|I|=2$).

Let  $|I|\geq 3$ and $|J|\geq 3$. 

If $|I|=4$ and $|J|=4$ then all variables in $x_{IJ}$ commute with each other. The total coefficient is then $c_{IJ}=2$: there are $\binom{4}{2}=6$ ways to obtain $x_{IJ}$ from $h_2h_2$ by choosing  which two variables $x_{ij}$ come from the first $h_2$ and there are 4 ways to obtain it from $h_3h_1$ by choosing which variable comes from $h_1$. 

If $|I|=3$ and $|J|=4$ then $x_{IJ}=x_{i_1j_1}x_{i_1j_2}x_{i_2j_3}x_{i_3j_4}$ and $x_{i_1j_1}$ and $x_{i_1j_2}$ do not commute with each other, but all other pairs commute. The coefficient in $h_2(\t)h_2(\t)$ is 4 since $x_{i_1j_1}x_{i_1j_2}$ can come from the first $h_2(\t)$ fully, the second $h_2(\t)$ fully or both partially (i.e., $x_{i_1j_1}$ comes from the first $h_2(\t)$ and $x_{i_1j_2}$  from the second $h_2(\t)$). The corresponding coefficient in $h_3(\t)h_1(\t)$ is 3 since only $x_{i_1j_1}$ cannot come from $h_1(\t)$, so  we get $c_{IJ}=1$. 

If $|I|=3$ and $|J|=3$ the considerations depend on how the indices are distributed with respect to each other and a more careful analysis is needed.
Suppose $i_l = i_r$ and $j_l=j_r$. Then the remaining 2 variables commute with $x_{i_r,j_r}=x_{i_lj_l}$, so $x_{IJ} = x_{i_lj_l}x_{i_rj_r}... =0$.  

Let the repeating indices be $i\in I$ and $j\in J$, not both in the same variables. If $x_{ij}$ is not in $x_{IJ}$, then the variables $x_{i*}$ and $x_{*j}$ commute with each other. Let $x_{IJ} =x_{ia}x_{ib}x_{cj}x_{dj}$, then $[x_{IJ}]h_2(\t)h_2(\t) = 1$ since $x_{ia}x_{cj}$ must come from the first $h_2(\t)$ and $[x_{IJ}]h_3(\t)h_1(\t)=1$ since $x_{dj}$ must come from $h_1$, so $[x_{IJ}]s_{(2,2)}(\t)=c_{IJ}=0$. 

Suppose now that $x_{ij}$ appears in $x_{IJ}$ exactly once. There are four distinct commutation classes: $\sim x_{ia}x_{bj}x_{ij}x_{cd}$, $\sim x_{ia}x_{ij}x_{bj}x_{cd}$, $\sim x_{ij}x_{ia}x_{bj}x_{cd}$ $\sim x_{bj}x_{ij}x_{ia}x_{cd}$. For each such class we have the following coefficients in $h_2(\t)h_2(\t)$, $h_3(\t)h_1(\t)$ and $s_{(2,2)}(\t)$, derived by reasoning similar to the already used in the previous cases:

\medskip

\begin{tabular}{|c|c|c|c|c|}
\hline
$x_{IJ} \sim$ & $x_{ia}x_{bj}x_{ij}x_{cd}$ & $x_{ia}x_{ij}x_{bj}x_{cd}$ & $x_{bj}x_{ij}x_{ia}x_{cd}$ & $ x_{ij}x_{ia}x_{bj}x_{cd}$ \\
\hline
$[x_{IJ}] h_2(\t)h_2(\t)$ & 2  &1&  1 & 2 \\
$[x_{IJ}]h_3(\t)h_1(\t)$ & 1 & 1& 0 & 1 \\
$[x_{IJ}]s_{(2,2)}(\t)$ & 1 & 0 &1 & 1 \\
\hline
\end{tabular}  

\medskip

Last, if $|I|=4$ and $|J|=3$, then $[x_{IJ}]h_2(\t)h_2(\t)=2$ and $[x_{IJ}]h_3(\t)h_1(\t)=1$, so $c_{IJ}=1$.  


Noticing that we can write $c_{IJ}=0$ or $1$ whenever $x_{IJ}=0$ we can unify some of the cases and obtain the desired statement.
\end{proof}


\section{Final remarks}

The next step of the approach presented in this paper would be to derive an explicit nonnegative expansion for $s_{\la}(\t_1,\ldots,\t_k)$ when $\la=(n-k,k)$ is a two-row partition. The natural approach is to represent this Schur function via the  Jacobi-Trudi identity as $h_kh_{n-k}-h_{k-1}h_{n-k+1}$ and apply the known expansions for the homogeneous symmetric functions $h$. The main difficulty in this case is the apparent lack of a proper analogue of Lemma \ref{lemma:classes} which would enable the identification of monomials appearing in $h_kh_{n-k}$ and $h_{k-1}h_{n+1-k}$. However,  with the right interpretation and clever use of facts like Lemma \ref{lemma:rectangle_fit}, the current approach might be extendedable first to two-row partitions and then via a generalization to all shapes. 

\section*{Acknowledgements} The first author would like to thank Ricky I. Liu for interesting discussions regarding the coefficients 
$c_{\lambda}^D$. The second author would like to thank Nantel Bergeron for interesting discussions on the subject and the suggestion to do the $(2,2)$ case explicitly. The authors would also like to thank the anonymous referee for her careful reading, valuable suggestions and enthusiasm.


\end{document}